\theoremstyle{plain}
\newtheorem{theorem}{Theorem}
\newtheorem{proposition}[theorem]{Proposition}
\newtheorem{corollary}[theorem]{Corollary}
\newtheorem{lemma}[theorem]{Lemma}
\newtheorem*{question}{Question}
\theoremstyle{definition}
\newtheorem{remark}[theorem]{Remark}
\newcommand{\RR}{{\mathbb{R}}}
\newcommand{\cB}{{\mathcal{B}}}
\newcommand{\vones}{\mathbbm{1}}
\newcommand{\suchthat}{\, : \,}
\newcommand{\SetOf}[2]{\left\{#1\vphantom{#2}\suchthat\vphantom{#1}#2\right\}}
\newcommand\smallSetOf[2]{\{{#1}\suchthat{#2}\}}
\newcommand{\card}[1]{\lvert {#1} \rvert}
\newcommand{\order}[1]{O(#1)}
\DeclareMathOperator{\conv}{conv}
\DeclareMathOperator{\vertexset}{vert}
\DeclareMathOperator{\GL}{GL} 
\DeclareMathOperator{\PGL}{PGL} 
\renewcommand{\phi}{\varphi}
\newcommand{\bounded}[1]{\cB(#1)}       
\newcommand{\vertices}{\mathcal{V}}     
\newcommand{\facets}{\mathcal{F}}       
\newcommand{\hassediagram}{\mathcal{D}} 
\newcommand{\facelattice}[1]{\mathscr{L}({#1})}  
\newcommand{\projbnd}[1]{\overline{#1}} 
\newcommand{\farface}{F_{\infty}}       
\newcommand{\vertposet}[1]{\mathscr{P}({#1})}  
\newcommand{\sizefl}[1]{\phi({#1})}     
\newcommand{\sizeflP}{\phi}             
\newcommand{\sizeflprojbndP}{\overline{\phi}}
\newcommand{\sizeboundedP}{\phi'}
\newcommand{\sizeVertexPosetP}{\phi''}
\newcommand{\nverticesbndP}{\overline{n}}
\newcommand{\nfacetsbndP}{\overline{m}}
\newcommand{\nverticesP}{n}
\newcommand{\nfacetsP}{m}
\newcommand{\subsets}[1]{\mathscr{M}(#1)}
\newcommand{\polymake}{\texttt{polymake}\xspace}
\newenvironment{dense_itemize}{%
\begin{list}{$\circ$}%
{\setlength{\topsep}{1mm}%
\setlength{\partopsep}{0mm}%
\setlength{\parskip}{0mm}%
\setlength{\parsep}{0mm}%
\setlength{\itemsep}{0mm}%
\setlength{\labelwidth}{4mm}%
\setlength{\leftmargin}{0mm}%
\addtolength{\leftmargin}{\labelwidth}%
\addtolength{\leftmargin}{\labelsep}%
\setlength{\itemindent}{0mm}}}%
{\end{list}}
\begin{document}

\title[Computing Bounded Subcomplexes]{Computing the bounded subcomplex of an unbounded polyhedron}

\author[Herrmann, Joswig, and Pfetsch]{Sven Herrmann \and Michael Joswig \and Marc E. Pfetsch}
\address{Sven Herrmann, School of Computing Sciences, University of East Anglia, Norwich, NR4 7TJ, UK}
\email{s.herrmann@uea.ac.uk}
\address{Michael Joswig, Fachbereich Mathematik, TU Darmstadt, 64289 Darmstadt, Germany}
\email{joswig@mathematik.tu-darmstadt.de}
\address{Marc E. Pfetsch, Institute for Mathematical Optimization, TU
  Braunschweig, 38106 Braunschweig, Germany}
\email{m.pfetsch@tu-bs.de}

\begin{abstract}
  We study efficient combinatorial algorithms to produce the Hasse diagram
  of the poset of bounded faces of an unbounded polyhedron, given
  vertex-facet incidences. We also discuss the special case of simple
  polyhedra and present computational results.
\end{abstract}

\maketitle

\section{Introduction}

\noindent
The \emph{bounded subcomplex} of an (unbounded) convex polyhedron, is its
set of bounded (or, equivalently, compact) faces, partially ordered by
inclusion.  Polytopal complexes of this type arise in several situations
each of which is interesting in its own right.  Prominent examples are the
\emph{tight spans} of finite metric spaces of Dress~\cite{Dress},
see also Isbell~\cite{Is}, and the \emph{tropical polytopes} of Develin and
Sturmfels~\cite{DS}.  The purpose of this note is to present algorithms to
deal with such objects.

Typically, in applications a bounded complex of a polyhedron $P$ is given
only implicitly by a set of inequalities defining~$P$.  The primary goal is
to establish algorithms to make the bounded subcomplex explicit from this
input.  A direct approach is to enumerate the full face lattice
$\facelattice{\projbnd{P}}$ of a polytope~$\projbnd{P}$ projectively
equivalent to $P$ (which exists if $P$ is pointed) and to filter it to
obtain $\bounded{P}$, the poset of bounded faces.  However, since
$\facelattice{\projbnd{P}}$ often is much larger than $\bounded{P}$, this
is not efficient.  

One natural approach is to start with a (dual) convex hull computation which
also yields the vertex-facet incidences.  Here we focus on combinatorial
algorithms which take these vertex-facet incidences as input.  Other
approaches are discussed briefly.  Our Algorithm~\ref{algo:selective} in
Section~\ref{sec:SelectiveGeneration} is a modification of a combinatorial
algorithm for face lattice enumeration~\cite{KP}.  It uses the vertex-facet
incidences of~$\projbnd{P}$.  This method of \emph{selective generation} is
best possible, in the sense that its running time is linear in the size
of~$\bounded{P}$.  If only the vertex-facet incidences of~$P$ are given, that
is, no information about the unbounded edges is present, one can still
compute~$\bounded{P}$, although at the cost of a higher running time
(quadratic in the size of~$\bounded{P}$).  The corresponding
Algorithm~\ref{algo:moebius} is based on interleaving the computation of the
bounded faces with an incremental computation of the poset's M\"obius
function.

For the sake of simplicity of exposition we usually consider
complexity questions in the RAM model.  However, it is easy to modify
each of our results such that the bit complexity can be determined:
The only potential source of non-polynomiality in terms of RAM
complexity arises from calling LP type oracles.  Therefore, whenever
necessary, we explicitly mention the relevant sizes of the linear
programming problems which need to be solved.

If additional structural information is available, specialized algorithms come
into focus.  For instance, in the case where the polyhedron $P$ is simple, it
turns out that $\bounded{P}$ is determined by the (directed) vertex-edge-graph
of~$P$, see~\cite[Section~2]{HJ}.  This is an unbounded version of a result of
Blind and Mani~\cite{BM} obtained by applying techniques due to
Kalai~\cite{Ka}.  In Section~\ref{sec:Simple}, we employ the reverse-search
approach of Avis and Fukuda~\cite{AF} to generate the (directed) vertex-edge
graph of $\bounded{P}$ from an inequality description of~$P$, provided that
$P$ is simple.  This can be used to either construct the bounded faces or to
efficiently compute the face numbers of $P$ and~$\bounded{P}$.

In Section~\ref{sec:ComputationalResults}, computational experiments
with Algorithm~\ref{algo:selective} are presented. We investigate five
different cases in which it is interesting to compute the bounded
subcomplex. The computations show that some (surprisingly) large
instances can be handled.  The paper closes with a list of open
problems.

We are grateful to an anonymous referee for meticulous reading and for
requiring a correction in Algorithm~\ref{algo:moebius}.  This also contributed
to a cleaner description of this method.

\section{Preliminaries and Notation}
\label{sec:notation}

\noindent
Let $P$ be a polyhedron. The lattice of all faces (\emph{face lattice}) of $P$ is
denoted by~$\facelattice{P}$.  We define $\sizefl{P} =
\card{\facelattice{P}}$, that is, the number of all faces of~$P$. Moreover,
let~$\nverticesP$ be the number of vertices of~$P$ and~$\nfacetsP$ be its
number of facets.

A polyhedron is called \emph{pointed} if it does not contain an entire affine
line.  In the sequel we always assume that $P$ is pointed but unbounded.  For
our purposes this does not mean any loss of generality, since the bounded
subcomplex of a polyhedron is non-empty if and only if it is pointed.  In this
case, the polyhedron $P$ is projectively equivalent to a
polytope~$\projbnd{P}$.  Each such polytope is called a \emph{projective
  closure} of $P$.  A projective closure of $P$ has a special face~$\farface$
(the \emph{far face}) corresponding to the face at infinity.  Fixing an
admissible projective transformation $\gamma$ which maps $P$ to $\projbnd{P}$,
the face $\farface$ is the unique maximal face among the faces
of~$\projbnd{P}$ that are not images of faces of~$P$ under $\gamma$.  Note
that the combinatorial type of~$\farface$ depends on the geometry of~$P$, not
only its combinatorial structure, and that its dimension can be any number in
$\{0, \dots, \dim{P}-1\}$.

We begin with the description of an algorithm to compute the polytope~$\projbnd{P}$
from an inequality description of $P \subseteq \RR^d$. We refer to
Ziegler's monograph~\cite{Ziegler95} for a general discussion of admissible
projective transformations and~\cite[\S3.4]{JoswigTheobald08} for an
explicit construction. First, we compute one vertex $v$ of~$P$ by solving a
linear program similar to Phase I of the Simplex Algorithm.  Such a vertex
exists as $P$ is pointed.  In a second step we determine the active
constraints at $v$, that is, those inequalities which are satisfied with
equality at $v$.  Let $\tau$ be the affine transformation moving $v$ into
the origin.  In the image $\tau(P)$ the constraints active at $0$
correspond to homogeneous linear equations.  Any subset of those
constraints defines a polyhedral cone containing the translated polyhedron
$\tau(P)$.  Among the constraints active at~$0$ we choose a dual basis by
Gau\ss{}--Jordan-elimination.  Next we pick a linear transformation~$\rho$
mapping this dual basis to the basis which is dual to the standard basis of
$\RR^d$.  Then the image $\rho(\tau(P))$ is a polyhedron with vertex $0$
such that the boundary hyperplanes of the positive orthant define valid
inequalities.  In particular, $\rho(\tau(P))$ is contained in the positive
orthant.

In general, intersecting the image of a polyhedron in $\RR^d$ under a
projective transformation in $\PGL_d(\RR)$ with $\RR^d$ may yield
something non-convex.  However, if $M\in\GL_{d+1}(\RR)$ is an
invertible matrix with non-negative coefficients, then the induced
projective linear transformation $[M]\in\PGL_d(\RR)$ maps the positive
orthant into itself, and the image of any polyhedron inside is again a
polyhedron.  Now let $\mu$ be the unique projective linear
transformation which fixes each coordinate hyperplane of $\RR^d$ and
which additionally maps the hyperplane at infinity to the affine
hyperplane $\sum x_i=1$.  This transformation can be written as a
$(d+1)\times(d+1)$-matrix of zeroes and ones.  The polyhedron
\[
\projbnd{P} \ := \ \mu(\rho(\tau(P)))
\]
is contained in the simplex given by $\sum x_i\le 1$ and $x_i\ge 0$ for
$i\in[d]$.  In particular, $\projbnd{P}$ is bounded and projectively
equivalent to $P$.  The equation $\sum x_i=1$ defines a hyperplane
supporting $\projbnd{P}$, and its intersection with $\projbnd{P}$ is the
far face $\farface$.  For given matrix $A$ and right hand side $b$, let
$\ell(A,b)$ be the time complexity of solving the linear feasibility
problem to find a point~$x$ satisfying $Ax \le b$.  The discussion above can
be summarized as follows.

\begin{proposition}\label{prop:proj}
  If $P=P(A,b)$ is given by inequalities, there exists an explicit algorithm
  to compute~$\projbnd{P}$ in $O(\ell(A,b))$ time.
\end{proposition}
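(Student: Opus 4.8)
The plan is to verify that the construction described just above the statement is correct, and then to account for its running time; the proposition is essentially a summary of that construction, so the real content lies in the bookkeeping. Correctness amounts to three points: (i) each step is well defined and can be executed from the data $(A,b)$, (ii) the composite map $\mu\circ\rho\circ\tau$ is an admissible projective transformation, and (iii) its image is a polytope whose far face is $\farface=\projbnd{P}\cap\{x : \sum_i x_i=1\}$.

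First I would note that, since $P$ is non-empty and pointed, it has a vertex, and a Phase-I-type linear program over $P(A,b)$ with a lexicographic pivoting rule returns one; this is the feasibility solve whose cost we write as $O(\ell(A,b))$. If $P$ is not full dimensional one first passes to $\operatorname{aff} P$ (itself obtained from the same system together with Gau\ss--Jordan elimination), which changes none of the asymptotics. Next, at the computed vertex $v$ the polyhedron $\tau(P)$ has $0$ as a vertex, so the inequalities active at $0$ span $(\RR^d)^*$; Gau\ss--Jordan elimination selects a dual basis among them, and $\rho\in\GL_d(\RR)$ is the resulting change-of-basis matrix sending this dual basis to the one dual to the standard basis. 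After applying $\rho$ the coordinate hyperplanes bound $\rho(\tau(P))$, so $\rho(\tau(P))$ lies in the positive orthant with $0$ as a vertex. The matrix $M$ inducing $\mu$ is the explicit $(d+1)\times(d+1)$ zero-one matrix described above; its entries being non-negative, $[M]$ maps the positive orthant into itself, hence is admissible on $\rho(\tau(P))$, and carries it into the simplex $\{x : \sum_i x_i\le 1,\ x_i\ge 0 \text{ for } i\in[d]\}$. Therefore $\projbnd{P}=\mu(\rho(\tau(P)))$ is bounded; being the image of $P$ under the admissible projective transformation $\mu\circ\rho\circ\tau$, it is projectively equivalent to $P$, and the face at infinity of $P$ is sent to $\projbnd{P}\cap\{x :\sum_i x_i=1\}=\farface$. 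This establishes (i)--(iii).

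For the running time, the only step that is not a fixed polynomial in $d$ and the size of $(A,b)$ in the RAM model is the Phase-I linear program, which costs $O(\ell(A,b))$; computing $\tau$, the dual basis, $\rho$, the matrix $M$, and the matrix products yielding the final inequality description of $\projbnd{P}$ are all bounded by a polynomial in $d$ and $m$ and are absorbed into $O(\ell(A,b))$. The one place to be careful is exactly this accounting — in particular, making sure that the feasibility solve can be made to return a vertex rather than merely a feasible point (the lexicographic rule), and that the reduction to $\operatorname{aff} P$ in the non-full-dimensional case together with the subsequent linear algebra do not dominate. No deeper obstacle is present.
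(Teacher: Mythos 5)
Your proposal is correct and takes essentially the same approach as the paper: the paper's "proof" of this proposition is exactly the construction described in the paragraphs preceding it (Phase-I LP to find a vertex, translation $\tau$, Gau\ss--Jordan to pick a dual basis and the linear map $\rho$, the fixed $0/1$ matrix $M$ inducing $\mu$, and the observation that the LP solve dominates the remaining polynomial-time linear algebra), and you verify precisely those steps. Your two added remarks --- that the feasibility solve should be run so as to return a basic feasible solution, and that one can first pass to $\operatorname{aff} P$ in the non-full-dimensional case --- are harmless elaborations of points the paper leaves implicit.
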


The unbounded edges of~$P$ correspond to edges of~$\projbnd{P}$ that contain
exactly one vertex in~$\farface$.  The vertices of~$P$ correspond to vertices
of~$\projbnd{P}$ that are not contained in~$\farface$.  Via these relations it
is easy to describe the face lattice of~$P$ in terms of~$\projbnd{P}$ and vice
versa.  The following relationship holds between the size $\phi=\sizefl{P}$ of
the face posets of $P$ and the size $\sizeflprojbndP = \sizefl{\projbnd{P}}$ of
its projective closure.

\begin{lemma}
  We have $\sizeflprojbndP\le 2(\sizeflP-1)$.
\end{lemma}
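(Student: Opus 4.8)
The idea is to set up an explicit injection (or near-injection) from the faces of $\projbnd{P}$ into pairs built from faces of $P$. Every face $F$ of $\projbnd{P}$ is of exactly one of two kinds: either $F$ is the image $\gamma(G)$ of a face $G$ of $P$, or it is not. Faces of the first kind are in bijection with the faces of $P$, so they contribute exactly $\sizeflP$ elements (this includes the improper faces $\emptyset$ and $\projbnd{P}$ itself, matching $\emptyset$ and $P$). It remains to bound the number of faces of $\projbnd{P}$ of the second kind, i.e. those contained in the far face $\farface$.

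First I would observe that the faces of $\projbnd{P}$ contained in $\farface$ are exactly the faces of the polytope $\farface$, so their number is $\sizefl{\farface}$. The key geometric fact is that $\farface$ is the ``face at infinity'': its faces correspond to the recession directions of $P$, and more precisely each proper face $F$ of $\farface$ is the set of points at infinity of a unique unbounded face $G$ of $P$ with $\dim G = \dim F + 1$. This gives an injection from the proper faces of $\farface$ (including $\emptyset$) into the unbounded faces of $P$; since $P$ is unbounded, $P$ itself is such an unbounded face, and $\farface$ as the top face of itself can be matched with $P$. Hence $\sizefl{\farface} \le \sizeflP$.

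Combining, the total number of faces of $\projbnd{P}$ is at most $\sizeflP$ (faces that are images of faces of $P$) plus $\sizefl{\farface}$ (faces inside $\farface$) — but I must be careful not to double count: no face of $\projbnd{P}$ can be both an image of a face of $P$ and contained in $\farface$, except possibly the empty face, which I have counted on the $\sizeflP$ side. On the $\farface$ side the empty face also appears, so the honest count is $\sizeflprojbndP \le \sizeflP + (\sizefl{\farface} - 1) \le \sizeflP + (\sizeflP - 1) = 2\sizeflP - 1$. To reach the sharper bound $2(\sizeflP - 1) = 2\sizeflP - 2$ in the statement, I would sharpen one of the two estimates by one: for instance, note that in matching proper faces of $\farface$ with unbounded faces of $P$ we can avoid using $P$ itself on the image side (the full polytope $\projbnd{P}$ is already counted among images of faces of $P$, being $\gamma(P)$), so the count of non-image faces is at most the number of unbounded \emph{proper} faces of $P$ plus one for $\farface$ itself, which is $\le (\sizeflP - 2) + 1 = \sizeflP - 1$, but we also drop the empty face from this tally since it was already counted, giving $\sizeflprojbndP \le \sizeflP + (\sizeflP - 2) = 2(\sizeflP - 1)$.

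The main obstacle is getting the bookkeeping of the improper faces exactly right, since $\emptyset$ and the full polyhedron interact with both counts and an off-by-one error changes $2\sizeflP - 1$ into $2(\sizeflP-1)$. The cleanest route is probably to fix once and for all the correspondence $G \mapsto \gamma(G)$ on all faces of $P$ (including improper ones), check it is injective with image precisely the faces of $\projbnd{P}$ not contained in $\farface$ together with $\projbnd{P}$ itself, and then argue that the remaining faces — the proper faces of $\farface$ — inject into the set of unbounded proper faces of $P$, which has cardinality $\sizeflP - 2$; adding the two counts then yields the bound directly.
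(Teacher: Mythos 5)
Your overall strategy coincides with the paper's: split the faces of $\projbnd{P}$ into the images $\gamma(G)$ of faces $G$ of $P$ (exactly $\sizeflP$ of them) and the non-image faces, which are precisely the non-empty faces of $\farface$; then bound the second group by relating it to the unbounded faces of $P$. The paper phrases the relation as a \emph{surjection} $s$ from the unbounded faces of $P$ onto the non-empty faces of $\farface$ (where $s(F)$ is the intersection of the closure of $F$ with the hyperplane at infinity). That is all one needs; the extra structure you assert is false and the accounting at the end has a real gap.

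Concretely: the claim that each (proper) face $F$ of $\farface$ arises from a \emph{unique} unbounded face $G$ of $P$ with $\dim G = \dim F + 1$ does not hold. For $P = [0,\infty)\times[0,1]$, the far face $\farface$ is a single vertex $p_\infty$, and both rays of $P$ (as well as $P$ itself, of yet another dimension) all have $p_\infty$ as their set of points at infinity; there is no distinguished preimage and no fixed dimension shift. Fortunately the cardinality bound only needs surjectivity of $s$, so this does not doom the approach, but the map as you describe it is not well-defined. More seriously, your ``cleanest route'' asserts an injection of the non-image faces into the unbounded \emph{proper} faces of $P$, ``which has cardinality $\sizeflP - 2$.'' Both parts are wrong: if $P$ is a single ray, $\farface$ is a vertex whose only unbounded preimage under $s$ is $P$ itself, which is not proper; and the number of unbounded proper faces is at most $\sizeflP - 3$, not $\sizeflP - 2$, since $\varnothing$, $P$, and at least one vertex are excluded. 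The step that actually buys the extra $-1$ -- the whole point of the lemma's sharpened form -- is the observation that, because $P$ is pointed, it has at least one (bounded) vertex, so the number of unbounded faces of $P$ is at most $\sizeflP - 2$. You never invoke pointedness, and your bookkeeping (``drop the empty face from this tally'') does not substitute for it. Replacing your last paragraph with: non-image faces $= \sizefl{\farface}-1 \le$ number of unbounded faces of $P \le \sizeflP - 2$ (since $\varnothing$ and some vertex are bounded), hence $\sizeflprojbndP \le \sizeflP + (\sizeflP - 2)$, closes the gap and recovers exactly the paper's argument.
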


\begin{proof}
  For each unbounded face $F$ of $P$ let $s(F)$ be the intersection of $F$
  with the hyperplane at infinity.  Then $s$ is a map from the set of
  unbounded faces of $P$ to the set of non-empty faces of the far face
  $\farface$ which is surjective.  This proves that $\projbnd{P}$ has at
  most twice as many non-empty faces as~$P$.  To establish our slightly
  stronger claim, observe that $P$ must have at least one vertex, since we
  assumed $P$ to be pointed.
\end{proof}

The above bound is tight: For $\projbnd{P}$ consider any pyramid with its
basis as its special face.  In view of the obvious inequality
$\sizeflP\le\sizeflprojbndP$ the lemma shows that
$\sizeflprojbndP\in\Theta(\sizeflP)$; hence in statements about asymptotic
complexity, $\sizeflP$ and $\sizeflprojbndP$ can be used interchangeably.

We label the facets of~$\projbnd{P}$ from~$1$ to~$\nfacetsbndP$ and
identify each facet with its index in $\facets := \{1, \dots,
\nfacetsbndP\}$.  Similarly, label the vertices of~$\projbnd{P}$ from~$1$
to~$\nverticesbndP$, and identify each vertex with its index in $\vertices
:= \{1, \dots, \nverticesbndP\}$. Let $I \in \{0,1\}^{\nfacetsbndP
  \times \nverticesbndP}$ be a \emph{vertex-facet incidence matrix}
of~$\projbnd{P}$ with entries $I(f,v)$, that is, $I(f,v) = 1$ if facet~$f$ contains vertex~$v$,
and $I(f,v) = 0$ otherwise. Denote by $\alpha$ the number of incidences,
that is, the number of ones in~$I$. The face lattice of a polytope is
atomic and co-atomic: Every face of~$\projbnd{P}$ is uniquely defined by
its set of vertices and the set of facets it is contained in, respectively.
In the following, we will usually identify a face with its set of vertices
and store each such set by its characteristic vector (bitset). Hence, the
storage requirement per face is in~$\order{\nverticesbndP}$. Modifications
for other formats are straightforward -- see also
Section~\ref{sec:ComputationalResults} below.

The computation of an incidence matrix for a polytope given by inequalities
requires to generate the set of vertices, which can be exponentially many
in the number of inequalities.  There are several algorithms to generate
the vertices of a polytope, see Seidel~\cite{Sei97} for an overview and
Avis et al.~\cite{AviBS97} as well as \cite{Jos03} for discussions of the
complexity of vertex generation algorithms.  In fact, it is currently
unknown whether the vertices of a \emph{polytope} can be generated in
polynomial time in the combined size of the input and output.  It is,
however, known that it is NP-complete to decide whether the list of
vertices of a \emph{polyhedron} is complete (unbounded edges are ignored),
see Khachiyan et al.~\cite{KhaBBEG08} and Boros et al.~\cite{BorEGT10}. For
\emph{simple} $d$-polytopes, in which each vertex is contained in
exactly~$d$ facets, the \emph{reverse search} algorithm by Avis and
Fukuda~\cite{AF} produces the vertices in time $\order{d \cdot \nfacetsbndP
  \cdot \nverticesbndP}$.

Let~$\hassediagram$ be the \emph{Hasse diagram} of~$\projbnd{P}$. That is,
$\hassediagram$ is a directed rooted acyclic graph whose nodes correspond to
the elements of~$\facelattice{\projbnd{P}}$. If $N_H, N_G$ are nodes in
$\hassediagram$ and~$H$ and~$G$ are the corresponding faces of~$\projbnd{P}$,
then there is an arc $(N_H, N_G)$ in $\hassediagram$ if and only if $H \subset
G$ and $\dim(G) = \dim(H)+1$. Note that we distinguish between a face~$F$ and
its corresponding node~$N_F$ in $\hassediagram$, because this makes a
difference in Algorithms~\ref{algo:selective} and~\ref{algo:moebius} below.

\section{The Face Poset of the Bounded Subcomplex}\label{sec:poset}

\noindent
The partially ordered set (poset) of the bounded faces of an unbounded
polyhedron $P$ is denoted as $\bounded{P}$, and we call it the \emph{bounded
  subcomplex} of~$P$.  As a polytopal complex, $\bounded{P}$ is not pure in
general: it may have maximal faces of various dimensions.  The bounded
subcomplex is non-empty if and only if $P$ is pointed. We denote the size
of~$\bounded{P}$ by $\sizeboundedP$ in the following.

We primarily address the problem to compute~$\bounded{P}$, where~$P$ or
$\projbnd{P}$ is given in terms of a vertex-facet incidence matrix.  Other
input is discussed at the end of this section.  The output~$\bounded{P}$
should be given by its Hasse diagram, which is just a subgraph of the Hasse
diagram of~$\projbnd{P}$. Additionally or alternatively, the nodes, that is,
faces, might be labeled by either the set of facets the corresponding face
is contained in or by its set of vertices (this representation is unique,
since the faces are bounded); this requires an additional overhead of at
least~$\order{n}$ or~$\order{m}$ per face, respectively.

For the case where~$\sizeboundedP \approx \sizeflP$ and a vertex-facet
incidence matrix~$I \in \{0,1\}^{\nfacetsbndP \times \nverticesbndP}$
of~$\projbnd{P}$ is available, the direct approach is to apply the algorithm
of~\cite{KP} to generate the Hasse diagram of~$\projbnd{P}$
in~$\order{\nverticesbndP \cdot \alpha \cdot \sizeflP}$ time if the faces are
represented by their vertices; this algorithm will be modified in the next
section. We then remove the unbounded faces and their incident arcs by
checking whether the intersection of each face with~$\farface$ is empty or
not; if the faces are stored as bitsets, this requires
$\order{\nverticesbndP}$ time per face; the total running time for this step
is then dominated by the generation of the Hasse diagram of~$\projbnd{P}$.

Alternatively, one can also generate the Hasse diagram by working on the
dual, using $\order{\nfacetsbndP \cdot \alpha \cdot \sizeflP}$ time; in
this case, faces are represented by the facets they are contained in. In
this case, the removal step takes~$\order{\alpha}$ time per face, since we
have to intersect the vertex sets of~$\farface$ and of the facets that
contain the face (storing the vertex sets in sorted lists). If the
intersection is empty, the face is bounded. Thus, its running time is again
dominated by the generation of the Hasse diagram.

If $\sizeboundedP \ll \sizeflP$ there is a more efficient algorithm
available, which we describe in the next section.

\subsection{Selective Generation}
\label{sec:SelectiveGeneration}

In the following, we will describe an algorithm that computes (the Hasse
diagram of) $\bounded{P}$ efficiently if an incidence matrix of~$\projbnd{P}$
is available.  Our Algorithm~\ref{algo:selective} is a simple modification of the
algorithm in~\cite{KP}, which computes the face lattice of a polytope. We
provide some details for completeness and to enable a running time analysis.

\begin{algorithm}[tb]
  \dontprintsemicolon
  \linesnumbered
  
  \KwIn{incidence matrix $I$ of $\projbnd{P}$, far face~$\farface \subset \vertices$}
  \KwOut{Hasse diagram~$\hassediagram$ of $\bounded{P}$}

  initialize $\hassediagram$ with $N_{\varnothing}$
  corresponding to the empty face\;  \nllabel{line:init}
  %
  initialize the list $\mathcal{Q} \subseteq V(\hassediagram) \times
  2^\vertices$ by $(N_{\varnothing},\varnothing)$\;
  \While{$\mathcal{Q} \neq \varnothing$}{\nllabel{line:whileQ}
    choose some $(N_H,H) \in \mathcal{Q}$ and remove it from~$\mathcal{Q}$\;\nllabel{line:choose}
    compute the set~$\mathcal{G}$ of incident faces~$G$ with $\dim G = \dim H + 1$\;\nllabel{line:G}
    \ForEach{$G \in \mathcal{G}$}{\nllabel{line:ForG}
      \If{$G$ is bounded}{\nllabel{line:bounded}
        locate/create the node~$N_G$ corresponding to~$G$ in~$\hassediagram$\;\nllabel{line:locateG}
        \If{$N_G$ was newly created}{
          add~$(N_G,G)$ to~$\mathcal{Q}$\;
        }
        add the arc $(N_H,N_G)$ to~$\hassediagram$\;
      }
    }
  }
  \caption{The method of selective generation}
  \label{algo:selective}
\end{algorithm}

The algorithm performs a graph search through the Hasse diagram
of~$\bounded{P}$ starting from the bottom (the empty face). We denote by
$V(\hassediagram)$ the set of nodes of~$\hassediagram$. In each step we
consider a face~$H$ that has not been considered before, see
Step~\ref{line:choose}. Then we generate the set~$\mathcal{G}$ of all
faces~$G$ with $G \supset H$, $\dim G = \dim H + 1$, that is, there exists
an arc $(N_H,N_G)$ between the nodes $N_H$ and~$N_G$ corresponding to~$H$
and~$G$, respectively. It is shown in~\cite{KP} that the generation
of~$\mathcal{G}$ can be performed in $\order{\nverticesbndP^2} \subseteq
\order{\nverticesbndP \cdot \alpha}$ time. For each $G \in \mathcal{G}$, we
can now test whether~$G$ is unbounded by checking whether its intersection
with~$\farface$ is empty or not (in $\order{\nverticesbndP}$ time), see
Step~\ref{line:bounded}. If it is bounded, we proceed to generate the arc
$(N_H,N_G)$. For this, one has to find or create the node~$N_G$
corresponding to~$G$ (Step~\ref{line:locateG}). Here, we use a data
structure, called \emph{face tree} in~\cite{KP}, to store (bounded) faces
and their corresponding nodes.

Each edge of a face tree stores a vertex. The vertices along a path from
the root form a generating set for each face. Each sub-path corresponds to
a sub-face. Because each face of a bounded face is bounded as well, each
node of the tree corresponds to a different bounded face. The key point is
that the computation of the face from the generating set takes
$\order{\alpha}$ time, see~\cite{KP} for more details.  For future
reference we state the following.

\begin{lemma}[\cite{KP}]\label{lemma:face_tree}
  The face tree data structure allows to store bounded faces such that
  finding a face (or determining whether the face is not present) takes
  $\order{\alpha}$ time with a storage requirement of
  $\order{\nverticesbndP \cdot k}$, if~$k$ faces are stored.
\end{lemma}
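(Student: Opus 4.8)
The plan is to exhibit the face tree as a trie (prefix tree) on sorted sequences of vertex indices, and to argue that the stated bounds follow from the elementary properties of tries together with the observation that faces of bounded faces are bounded. First I would fix, for each bounded face $F$, a canonical generating sequence: order $\vertices$ by the labels $1,\dots,\nverticesbndP$, and among all inclusion-minimal vertex subsets whose affine hull (equivalently, whose smallest face of $\projbnd{P}$) equals $F$, pick the lexicographically smallest; write it as $v_1 < v_2 < \dots < v_r$. The face tree is then the trie storing exactly these sequences for the faces currently present, so that the path from the root to the node $N_F$ spells out $v_1,\dots,v_r$, and every prefix $v_1,\dots,v_i$ is the canonical generating sequence of a face $F_i \subseteq F$. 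Here is where one uses that $F$ bounded implies every $F_i$ bounded, hence that it is legitimate for all these prefix-nodes to live in a tree dedicated to bounded faces, and that distinct canonical sequences label distinct nodes, so the correspondence node $\leftrightarrow$ bounded face is injective.

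Next I would analyze the lookup cost. Given a face $G$ presented as a vertex bitset, computing its canonical generating sequence by the greedy rule means repeatedly: maintain the current partial face $H$ (as a bitset, initialized empty), scan the vertices of $G$ in increasing order, and for the smallest vertex $v$ with $v \in G \setminus H$ such that adding $v$ strictly enlarges the current face, recompute the face generated by $H \cup \{v\}$ and descend to the corresponding trie child; if no such child exists, the face is not present. The recomputation of the face generated by a vertex set is exactly the operation costing $\order{\alpha}$ referenced from~\cite{KP} (intersect the facet sets of the generating vertices, then take all vertices lying in that facet set). Since the canonical sequence has at most $\nverticesbndP$ terms but in fact one performs at most $\order{\nverticesbndP}$ such extension steps in total along the descent — and, crucially, the number of steps is $\dim G + 1 = \order{\nverticesbndP}$ while each step's dominant cost is the $\order{\alpha}$ face recomputation and $\alpha \ge \nverticesbndP$ — the total lookup time is $\order{\alpha}$ once one checks that the per-step bookkeeping (bitset updates, choosing the next vertex, locating the child among the siblings) is subsumed; locating a child can be done in $\order{\nverticesbndP} \subseteq \order{\alpha}$ by indexing children by vertex label. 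Creation, when $N_G$ is absent, only appends the missing suffix of the canonical sequence, each new edge labeled by one vertex, at no extra asymptotic cost.

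Finally, the storage bound: the trie on $k$ stored faces has at most $k$ leaves but possibly more internal nodes; however each internal node is itself a stored bounded face (a prefix of some canonical sequence is the canonical sequence of a subface, and all subfaces we create are themselves inserted as we generate them), so the trie has $\order{k}$ nodes altogether, and each node carries one vertex label plus child pointers, i.e. $\order{\nverticesbndP}$ words — giving $\order{\nverticesbndP \cdot k}$. I expect the main obstacle to be the bound on the number of extension steps in a single lookup: one must rule out that the greedy scan wastes effort by testing many vertices that fail to enlarge the face before finding one that does. The clean way around this is to observe that, during the descent, one never re-scans a vertex below the current position and one either descends (consuming a trie edge, of which there are at most $\dim G + 1$ on the path) or terminates with ``not present''; a vertex $v \in G$ that lies in the current face $H$ contributes no work beyond a bitset membership test, so the scan over $G$'s vertices is charged once overall at cost $\order{\nverticesbndP}$, and the genuinely expensive $\order{\alpha}$ face recomputations number only $\order{\dim G}= \order{\nverticesbndP}$ — wait, that would give $\order{\nverticesbndP \cdot \alpha}$, not $\order{\alpha}$; the resolution, and the point to get right by following~\cite{KP} carefully, is that the recomputation at step $i$ need not start from scratch but updates incrementally (intersecting the previous facet set with the facet set of the single new vertex is $\order{\nfacetsbndP}$, and reading off the resulting face is $\order{\alpha}$ but done only once at the end, or amortized), so that the whole lookup is genuinely $\order{\alpha}$ as claimed in~\cite{KP}.
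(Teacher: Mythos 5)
The paper does not prove this lemma; it quotes it from Kaibel--Pfetsch~\cite{KP} and only gives a one-paragraph description of the data structure. Your reconstruction of the structure itself -- a trie on lexicographically-minimal generating sequences, where each prefix encodes a subface and boundedness of subfaces of bounded faces makes the node/face correspondence injective -- matches that description, and your storage argument is sound.

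The gap is exactly where you flag it yourself: the $\order{\alpha}$ lookup bound. Your first pass gives $\order{\nverticesbndP \cdot \alpha}$ (one closure per descent step), and the patch you then propose is not coherent as stated. Intersecting facet sets incrementally indeed costs $\order{\nfacetsbndP}$ per added vertex and $\order{d \cdot \nfacetsbndP} \subseteq \order{\alpha}$ over a whole descent (since $\alpha \ge d \cdot \nfacetsbndP$ for a $d$-polytope), so that half is fine. But ``reading off the resulting face \dots done only once at the end, or amortized'' cannot be right: you need the intermediate faces $H_i$ during the scan, since the greedy rule selects the smallest vertex of $G \setminus H_{i-1}$, so deferring the read-off to the end defeats the algorithm; and the amortization you gesture at (charging vertex-counter updates to incidences) does not straightforwardly give $\order{\alpha}$, because a vertex enters the current face precisely when a facet \emph{not} containing it is dropped from $F$, and the number of such (vertex, dropped-facet) non-incidence pairs is not bounded by $\alpha$.

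The cleaner route, consistent with the stated $\order{\nverticesbndP \cdot k}$ storage, is to let each tree node store the vertex bitset of its face. Then the descent never recomputes a closure: at each node one advances a forward-only pointer over $[\nverticesbndP]$ to find the smallest $v \in G \setminus H$ (monotonicity of the $H_i$ guarantees the pointer never moves backwards, so the total scan is $\order{\nverticesbndP}$), and moving to the child is $\order{1}$. A pure lookup, including a ``not present'' verdict, is therefore $\order{\nverticesbndP} \subseteq \order{\alpha}$, and the single closure computation $\order{\alpha}$ from the paper's sketch is needed only when a new node is created -- which, in the BFS order of Algorithm~\ref{algo:selective}, happens at most once per call. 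You should rework the final paragraph of your time analysis along these lines, or else justify the amortized claim with an explicit counter argument and a bound on the associated non-incidence work.
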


Returning to Algorithm~\ref{algo:selective}, the while-loop in
Step~\ref{line:whileQ} is executed for each of the $\sizeboundedP$ bounded
faces, because of the boundedness test in Step~\ref{line:bounded}. Hence,
Steps~\ref{line:choose} and~\ref{line:G} contribute $\order{\nverticesbndP
  \cdot \alpha \cdot \sizeboundedP}$ time in total. The for-loop in
Step~\ref{line:ForG} is executed for each out-arc of a face
in~$\bounded{P}$ with respect to the Hasse diagram of~$\projbnd{P}$. Since
a face can have at most~$\nverticesbndP$ out-arcs, the for-loop is executed
at most $\nverticesbndP \cdot \sizeboundedP$ times.
Step~\ref{line:bounded} takes $\order{\nverticesbndP} \subseteq
\order{\alpha}$ time and Step~\ref{line:locateG} uses~$\order{\alpha}$
time, because of Lemma~\ref{lemma:face_tree}; note that since~$P$ is
pointed, each facet contains at least one vertex, i.e., $\nverticesbndP
\leq \alpha$. Thus, the for-loop contributes $\order{\nverticesbndP \cdot
  \alpha \cdot \sizeboundedP}$ time.  This shows the following.

\begin{theorem}\label{thm:selectiveGeneration}
  Given the vertex-facet incidences of~$\projbnd{P}$, the Hasse diagram
  of~$\bounded{P}$ can be computed in time $\order{\nverticesbndP \cdot
    \alpha \cdot \sizeboundedP}$.
\end{theorem}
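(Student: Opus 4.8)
The plan is to trace through Algorithm~\ref{algo:selective} and account for the work done, exactly as sketched in the paragraph preceding the theorem statement, but presented as a self-contained argument. The key structural facts we rely on are: correctness of the selective-generation traversal (which we inherit from the analysis of the face-lattice algorithm in~\cite{KP}, together with the observation that every face of a bounded face is bounded, so the bounded faces form a lower set in $\facelattice{\projbnd{P}}$ and the breadth-first / queue-based sweep from $N_\varnothing$ reaches exactly them); and the face-tree bound of Lemma~\ref{lemma:face_tree}.

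First I would argue that the while-loop in Step~\ref{line:whileQ} runs exactly $\sizeboundedP$ times: a pair $(N_H,H)$ is enqueued precisely when a bounded face is first created (and $N_\varnothing$ is enqueued at initialization), and the boundedness test in Step~\ref{line:bounded} guarantees no unbounded face is ever inserted, so the number of iterations equals the number of bounded faces $\sizeboundedP$. In each such iteration, computing the covering faces $\mathcal{G}$ in Step~\ref{line:G} costs $\order{\nverticesbndP^2}\subseteq\order{\nverticesbndP\cdot\alpha}$ by the result of~\cite{KP} (using $\nverticesbndP\le\alpha$, valid since $P$ is pointed so every facet has a vertex), and Step~\ref{line:choose} is dominated by this. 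Hence the outer work totals $\order{\nverticesbndP\cdot\alpha\cdot\sizeboundedP}$.

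Next I would bound the inner for-loop of Step~\ref{line:ForG}. Across the whole run it executes once per pair $(N_H,G)$ with $N_H$ a bounded face drawn from the queue and $G$ a covering face of $H$ in $\projbnd{P}$; since a face of a polytope is covered by at most $\nverticesbndP$ faces, the for-loop body runs at most $\nverticesbndP\cdot\sizeboundedP$ times in total. Each execution does a boundedness test (Step~\ref{line:bounded}) in $\order{\nverticesbndP}\subseteq\order{\alpha}$ time by intersecting the bitset of $G$ with that of $\farface$, a face-tree lookup/insertion (Step~\ref{line:locateG}) in $\order{\alpha}$ time by Lemma~\ref{lemma:face_tree}, and a possible enqueue and arc insertion in $\order{\nverticesbndP}$ time. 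So the inner loop contributes $\order{\nverticesbndP\cdot\alpha\cdot\sizeboundedP}$ as well, and adding the two parts and the $\order{\ell}$ initialization gives the claimed $\order{\nverticesbndP\cdot\alpha\cdot\sizeboundedP}$ bound.

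The main obstacle is not the arithmetic but justifying that the traversal visits exactly the bounded faces and no others, and that each is visited only once. For this one must observe that the bounded faces of $P$ (equivalently, the faces of $\projbnd{P}$ disjoint from $\farface$) form an order ideal in $\facelattice{\projbnd{P}}$: a face of a compact face is compact. Consequently, starting from $\varnothing$ and only ever descending into bounded covering faces never prunes away a bounded face, while the face tree (Lemma~\ref{lemma:face_tree}) ensures each bounded face is created and enqueued exactly once; the correctness of enumerating \emph{all} covering faces at each step is the content of~\cite{KP}, which we may invoke. With that in hand the running-time accounting above is routine.
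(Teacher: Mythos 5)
Your proposal matches the paper's own running-time analysis step for step: count $\sizeboundedP$ iterations of the while-loop; bound Step~\ref{line:G} by $\order{\nverticesbndP^2}\subseteq\order{\nverticesbndP\cdot\alpha}$ via~\cite{KP}; bound the for-loop body by $\nverticesbndP\cdot\sizeboundedP$ executions at $\order{\alpha}$ each, using Lemma~\ref{lemma:face_tree} and $\nverticesbndP\le\alpha$. The only thing you add beyond the paper's prose is the explicit correctness remark that the bounded faces form an order ideal in $\facelattice{\projbnd{P}}$, which the paper leaves implicit; this is a welcome clarification but does not change the approach.
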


\begin{remark}\label{rem:SelectiveGenerationSpace}
  By Lemma~\ref{lemma:face_tree}, the face tree uses $\order{\nverticesbndP
    \cdot \sizeboundedP}$ space. Storing faces as bitsets, the list
  $\mathcal{Q}$ needs at most~$\order{\nverticesbndP \cdot \sizeboundedP}$
  space. The output amounts to an additional space requirement
  of~$\order{\nverticesbndP \cdot \sizeboundedP}$.  Thus, we need a total
  amount of storage of $\order{\nverticesbndP \cdot \sizeboundedP}$.
\end{remark}

\begin{remark}
  Algorithm~\ref{algo:selective} is faster than the straight-forward
  algorithm described above if $\nverticesbndP \cdot \sizeboundedP <
  \min\{\nverticesbndP, \nfacetsbndP\} \cdot \sizeflP$.
\end{remark}

In Section~\ref{sec:Simple} below, we discuss the special case of simple
polyhedra.

\subsection{Selective Generation Without Knowing the Face at Infinity}
\label{sec:VertexPoset}

If only a vertex-facet incidence matrix~$I \in \{0,1\}^{\nfacetsP \times
  \nverticesP}$ of an unbounded (pointed) polyhedron~$P$ is known, but no
information about the unbounded edges is available, one can still produce (the
Hasse diagram of) $\bounded{P}$ as follows.

Let $\vertposet{P} := \{ \vertexset(F) \suchthat F $ proper face of $P\} \cup
\{\varnothing\}$ (where $\vertexset(F)$ is the set of vertices of $F$) be the
poset of the vertex sets of proper faces of~$P$.  It can be computed from any
vertex-facet incidence matrix of $P$, since it is the set of all non-empty
intersections of the subsets of $\vertices$ defined by the rows of the
incidence matrix (and additionally the empty set).  Note that this poset
contains the poset~$\bounded{P}$ as a subposet, but may contain additional
vertex sets of unbounded faces. It follows that $\sizeboundedP \le
\sizeVertexPosetP$, where $\sizeVertexPosetP$ is the size of~$\vertposet{P}$;
see Remark~\ref{rem:dwarfed_size} below for an example which shows that the
gap between~$\sizeboundedP$ and~$\sizeVertexPosetP$ can be large. Moreover,
because~$\bounded{P}$ is a polytopal complex, we have the following: if
$\vertexset{G} \subseteq \vertexset{F}$ for an unbounded face~$G$ and a
bounded face~$F$, there exists a bounded face~$H$ such that $\vertexset{H} =
\vertexset{G}$.

Using results of~\cite{JosKPZ01}, the boundedness of faces can be decided in the
following way. The \emph{M\"obius number} of a poset element~$S \in
\vertposet{P}$ is defined as
\[
\mu(S) = 
\begin{cases}
  1, & \text{if } S = \varnothing,\\
  - \sum\limits_{S' \subsetneq S} \mu(S'), & \text{otherwise},
\end{cases}
\]
where the sum ranges over all poset elements~$S' \in \vertposet{P}$
strictly less than~$S$. For a face~$H \neq P$, we define $\hat{\mu}(H) :=
\mu(\vertexset{H})$. For $H = P$, we add an artificial
top-element~$\hat{1}$ to $\vertposet{P}$ and define~$\hat{\mu}(P) :=
\mu(\hat{1})$. Then, the face~$H$ is bounded if and only if~$\hat{\mu}(H)
\neq 0$, see \cite[Corollary 4.5]{JosKPZ01}.  Once all poset elements are
known, the entire \emph{M\"obius function} of $\vertposet{P}$, that is, the
M\"obius numbers for all poset elements, can be computed by inverting an
appropriate $(\sizeVertexPosetP \times \sizeVertexPosetP)$-matrix (the
so-called $\zeta$-matrix) in $\order{(\sizeVertexPosetP)^3}$ time.

We obtain the following algorithm to compute the Hasse diagram
of~$\bounded{P}$. Since~$\vertposet{P}$ is atomic, its Hasse diagram can be
generated by the algorithm in~\cite{KP} in time $\order{\nverticesP \cdot
  \beta \cdot \sizeVertexPosetP}$, where $\beta$ is the number of vertex-facet
incidences of the polyhedron~$P$ (excluding unbounded information). Then the
vertex sets corresponding to unbounded faces are removed by using the M\"obius
function. In total, we obtain an $\order{\max\{\nverticesP \cdot \beta,
  (\sizeVertexPosetP)^2\} \cdot \sizeVertexPosetP}$ time algorithm. Since
$\vertposet{P}$ is also co-atomic, one can again apply the algorithm
of~\cite{KP} to the dual, which then yields an $\order{\max\{\nfacetsP \cdot
  \beta, (\sizeVertexPosetP)^2\} \cdot \sizeVertexPosetP}$ time algorithm.
One can often improve these running times as follows.

\begin{theorem}
  Given the vertex-facet incidences of~$P$ (without information on
  unbounded edges), the Hasse diagram of~$\bounded{P}$ can be computed in
  time $\order{\max\{\nverticesP^2 \cdot \sizeboundedP, \nverticesP \cdot
    \beta\} \cdot \nverticesP \cdot \sizeboundedP}$.
\end{theorem}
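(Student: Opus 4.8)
The plan is to interleave the selective generation of Algorithm~\ref{algo:selective} with an incremental computation of the M\"obius function of $\vertposet{P}$, so that we never fully materialize the (possibly huge) poset $\vertposet{P}$. The key structural observation is the last sentence of Section~\ref{sec:VertexPoset}: if an unbounded face $G$ satisfies $\vertexset{G}\subseteq\vertexset{F}$ for some bounded face $F$, then there is a bounded face $H$ with $\vertexset{H}=\vertexset{G}$. This means that the bounded faces, together with those unbounded faces whose vertex set is dominated by a bounded vertex set, form a down-closed portion of $\vertposet{P}$ that we can explore by a graph search starting from $\varnothing$, exactly as in Algorithm~\ref{algo:selective}, generating covers via the $\order{\nverticesP^2}$ routine of~\cite{KP}. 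When we reach a new vertex set $S$, we decide whether the corresponding face is bounded by computing $\mu(S)=-\sum_{S'\subsetneq S}\mu(S')$ and invoking \cite[Corollary~4.5]{JosKPZ01}: $S$ is bounded iff $\mu(S)\neq 0$. Bounded faces are added to the face tree and their covers are queued for further exploration; unbounded faces are recorded (with their M\"obius number) but their covers are \emph{not} enqueued, since any strictly larger face containing such a $G$ that were bounded would, by the observation, force a bounded $H$ with $\vertexset{H}=\vertexset{G}$, contradicting unboundedness of $G$.

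The first step I would carry out is to bound the number of poset elements ever generated. The while-loop fires once per bounded face (there are $\sizeboundedP$ of them), and each bounded face has at most $\nverticesP$ covers in $\vertposet{P}$; so we create at most $\nverticesP\cdot\sizeboundedP$ vertex sets total, bounded or unbounded. This is the crucial gain over working with $\sizeVertexPosetP$. The second step is to analyze the cost of the M\"obius computation. To evaluate $\mu(S)$ we need all $S'\subsetneq S$ that lie in the explored part of $\vertposet{P}$ together with their already-computed M\"obius numbers. Since $S$ is a set of at most $\nverticesP$ vertices, it has at most... well, the relevant poset elements below it are the elements of $\vertposet{P}$ that are subsets of $S$, and these are precisely the faces of the face of $P$ determined by $S$; there are at most $\nverticesP\cdot\sizeboundedP$ generated poset elements in total, so a single M\"obius evaluation costs $\order{\nverticesP\cdot\sizeboundedP}$ additions after we have located the down-set of $S$. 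Locating which generated elements lie strictly below $S$ can be done by scanning all generated elements and testing subset inclusion in $\order{\nverticesP}$ each, i.e. $\order{\nverticesP^2\cdot\sizeboundedP}$ per evaluation; alternatively one walks downward in the Hasse diagram already built. Multiplying by the $\nverticesP\cdot\sizeboundedP$ poset elements generated gives the $\nverticesP^2\cdot\sizeboundedP\cdot\nverticesP\cdot\sizeboundedP$ term. The generation of covers via~\cite{KP} costs $\order{\nverticesP\cdot\beta}$ per call and there are $\nverticesP\cdot\sizeboundedP$ calls, giving the $\nverticesP\cdot\beta\cdot\nverticesP\cdot\sizeboundedP$ term; taking the maximum yields the claimed bound $\order{\max\{\nverticesP^2\cdot\sizeboundedP,\ \nverticesP\cdot\beta\}\cdot\nverticesP\cdot\sizeboundedP}$.

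The third step is to verify correctness of the interleaving, namely that restricting enqueuing to bounded faces still discovers \emph{every} bounded face. Here I would argue by induction on dimension: the empty face is bounded and enqueued; if $F$ is bounded of dimension $k+1$ then any facet $F'$ of $F$ (a cover-predecessor in $\vertposet{P}$) is a face of a bounded face, hence bounded, hence by induction discovered and enqueued, and so its cover $F$ is generated in Step~\ref{line:G} and, being bounded, added to the face tree. One also has to check that the M\"obius recursion is well-posed at the moment we evaluate it, i.e. that all $S'\subsetneq S$ in $\vertposet{P}$ have already been generated and have final M\"obius numbers; this follows because such $S'$ are faces of the (generated) face with vertex set $S$, every such $S'$ is dominated by the bounded... wait, $S$ itself might be the first unbounded face encountered, but its proper faces all have vertex sets dominated by a proper subset of a bounded vertex set, hence are bounded and already processed, so the recursion closes. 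For the top element $P$ one uses the artificial $\hat 1$ exactly as in the text. The main obstacle I anticipate is this well-posedness bookkeeping: one must be careful that a graph search from $\varnothing$ that only expands bounded nodes nevertheless reaches every $S'\subsetneq S$ before $S$ is evaluated, which is why the search should process nodes in order of nondecreasing dimension (a BFS layering by rank), and one must confirm that the referee's correction to Algorithm~\ref{algo:moebius} is precisely this layering discipline together with the domination lemma. The remaining details -- data structures for the face tree (Lemma~\ref{lemma:face_tree}), bitset intersections, and the arithmetic of potentially large M\"obius numbers, which contributes only to bit-complexity -- are routine.
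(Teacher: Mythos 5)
Your proposal is correct and is, at the level of strategy, the same as the paper's Algorithm~\ref{algo:moebius}: a BFS from $\varnothing$ over $\vertposet{P}$ that generates covers only of bounded elements, the down-closedness observation justifying the pruning at unbounded elements, the M\"obius criterion of \cite[Corollary~4.5]{JosKPZ01} to test boundedness, and a BFS-by-rank discipline guaranteeing that every $S'\subsetneq S$ in $\vertposet{P}$ either has its final M\"obius number available or is unbounded (hence contributes $0$) when $\mu(S)$ is evaluated; the count of $\order{\nverticesP\cdot\sizeboundedP}$ generated poset elements and the per-element cost $\order{\max\{\nverticesP^2\cdot\sizeboundedP,\ \nverticesP\cdot\beta\}}$ also match the paper's. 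The one genuine difference is in how $\mu(S)$ is assembled: the paper propagates, along cover arcs, a per-element data structure $\subsets{G}$ recording the elements strictly below $G$ together with their M\"obius numbers -- initializing it when $G$ is first created and augmenting it as further cover-predecessors of $G$ are processed -- whereas you rescan the entire list of generated elements and filter by subset inclusion at $\order{\nverticesP}$ a test. Your flat rescan is self-contained, avoids the bookkeeping of merging $\subsets{\cdot}$ across several cover-predecessors, and still fits the claimed bound, so it is a perfectly good substitute. One caution: your parenthetical alternative of ``walking downward in the Hasse diagram already built'' is not safe. The arc-adding step lies inside the ``$H$ is bounded'' branch, so no arcs ever leave an unbounded node; consequently a bounded $S'<G$ that sits under an \emph{unbounded} cover-predecessor of an unbounded $G$ but under no bounded one is invisible to a downward walk from $G$. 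The full scan has no such blind spot and is the version you should keep.
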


\begin{algorithm}[tb]
  \dontprintsemicolon
  \linesnumbered
  
  \KwIn{incidence matrix $I$ of $P$}
  \KwOut{Hasse diagram~$\hassediagram$ of $\bounded{P}$}
  
  initialize $\hassediagram$ with $N_{\varnothing}$
  corresponding to the empty face, $\subsets{\varnothing} = \{(\varnothing,1)\}$\;
  initialize queue~$\mathcal{Q} \subseteq V(\hassediagram) \times
  2^\vertices$ by $(N_{\varnothing},\varnothing)$\;
  \While{$\mathcal{Q} \neq \varnothing$}{\nllabel{line2:whileQ}
    remove first $(N_H,H)$ in $\mathcal{Q}$\;\nllabel{line2:choose}
    compute M\"obius number~$\hat{\mu}(H)$ using $\subsets{H}$\;\nllabel{line2:Moebius}
    \If{$\hat{\mu}(H) \neq 0$ ($H$ is bounded)}{\nllabel{line2:bounded}
      \ForEach{vertex $v$ not in $H$}{\nllabel{line2:ForG}
        $G$ $\leftarrow$ intersection of vertex sets of facets containing
        $H\cup\{v\}$ \; \nllabel{line2:closure}
        locate/create the node~$N_G$ corresponding to~$G$ in~$\hassediagram$\;\nllabel{line2:locateG}
        \If{$N_G$ was newly created}{
          add~$(N_G,G)$ to~$\mathcal{Q}$\nllabel{line2:createNew}\;
          $\subsets{G} \leftarrow \subsets{H}$\;\nllabel{line2:MoebiusUp1}
        }
        $\subsets{G} \leftarrow \subsets{G} \cup \{(H,\hat{\mu}(H))\}$\;\nllabel{line2:MoebiusUp2}
        add the arc $(N_H,N_G)$ to~$\hassediagram$\;
      }
    }
  }
  \caption{Method using the M\"obius function}
  \label{algo:moebius}
\end{algorithm}

\begin{proof}
  Algorithm~\ref{algo:moebius} is a modification of
  Algorithm~\ref{algo:selective}, in which the M\"obius numbers~$\hat{\mu}(H)$
  are computed on the fly in Step~\ref{line2:Moebius}. To this end, we use a
  set~$\subsets{H}$ that stores all elements of $\vertposet{P}$ strictly
  below~$H$ and their corresponding M\"obius numbers. Then
  Step~\ref{line2:Moebius} is a straight-forward summation.  Throughout this
  algorithm, the faces $G$ and $H$ of $P$ are encoded as sets of their
  vertices.

  To update~$\subsets{H}$ correctly, we perform a breadth-first search (BFS)
  over the Hasse diagram of~$\vertposet{P}$ from bottom to top, by
  organizing~$\mathcal{Q}$ in the algorithm as a queue.  We propagate the
  necessary information for the computation of M\"obius functions in
  Steps~\ref{line2:MoebiusUp1} and~\ref{line2:MoebiusUp2}.  It follows that
  once a face~$H$ leaves~$\mathcal{Q}$ in Step~\ref{line2:choose},
  $\subsets{H}$ contains all elements of $\vertposet{P}$ strictly below~$H$
  and their M\"obius functions.

  For the correctness of the entire algorithm it is essential that
  $\hat\mu(H)$ is computed correctly in Step~\ref{line2:Moebius}.  This is a
  consequence of two facts: First, $\hat\mu(H)$ is computed after all elements
  of $\vertposet{P}$ below $H$ have been processed; this is different from
  Algorithm~\ref{algo:selective}.  Second, the data structure $\subsets{H}$
  may also contain vertex sets of unbounded faces, but their contribution to
  $\hat\mu(H)$ is zero, by \cite[Corollary~4.5]{JosKPZ01}.  Therefore,
  $\hat\mu(H)$ evaluates to the M\"obius number of $H$ in the poset
  $\vertposet{P}$.

  If~$H$ is bounded, only its covering elements $G$ in $\vertposet{P}$ are
  generated in Step~\ref{line2:closure}.  More precisely, $G$ is generated
  from $H$ by adding one vertex $v$ not contained in $H$ and computing the
  \emph{closure} with respect to $I$; that is, $G$ is the intersection of all
  vertex sets of facets containing $H\cup\{v\}$; see~\cite[\S2.2]{KP}.  This
  is the only situation how new elements can enter the queue.  Hence, the
  while-loop in Step~\ref{line2:whileQ} is executed for each of the
  $\sizeboundedP$ bounded faces plus at most $\nverticesP \cdot \sizeboundedP$
  faces that are unbounded.

  The above discussion implies that $\card{\subsets{H}} \leq \nverticesP \cdot
  \sizeboundedP$, and hence Step~\ref{line2:Moebius} takes $\order{\nverticesP
    \cdot \sizeboundedP}$ time. Note that the size of~$\subsets{H}$
  is~$\order{\nverticesP^2 \cdot \sizeboundedP}$, as we store the vertex set
  of each face as a bitset.  The sets~$\subsets{G}$ can be organized using a
  face-tree. As observed in~{KP} Lemma~\ref{lemma:face_tree} can be used
  in this context as well, where we have to replace~$\alpha$ by~$\beta$
  and~$\nverticesbndP$ by~$\nverticesP$. To add information to~$\subsets{G}$
  in Step~\ref{line2:MoebiusUp2}, we need to check whether the subset~$H$ is
  present in~$\subsets{G}$, which can then be done in time~$\order{\nverticesP
    \cdot \beta}$. To copy~$\subsets{H}$ in Step~\ref{line2:MoebiusUp1}, we
  need~$\order{\nverticesP^2 \cdot \sizeboundedP}$ time.

  All other steps are as in Algorithm~\ref{algo:selective} and their analysis is
  similar as in the proof of Theorem~\ref{thm:selectiveGeneration}.  The only
  difference is that~$\alpha$ can be replaced by~$\beta$, that is, it is easy
  to see that by using the vertex-facet incidence matrix of~$P$, we only
  generate sets in~$\vertposet{P}$. In total, we obtain an
  $\order{\max\{\nverticesP^2 \cdot \sizeboundedP, \nverticesP \cdot \beta\}
    \cdot \nverticesP \cdot \sizeboundedP}$ time algorithm.
\end{proof}

\begin{remark}
  Each face-tree data structure needs $\order{\nverticesP^2 \cdot
    \sizeboundedP}$ space, see Remark~\ref{rem:SelectiveGenerationSpace}.  In
  total, the algorithm requires $\order{\nverticesP^3 \cdot
    (\sizeboundedP)^2}$ space for maintaining the sets~$\subsets{H}$. If faces
  are stored as bitsets, the queue $\mathcal{Q}$ needs at
  most~$\order{\nverticesP^2 \cdot \sizeboundedP}$ space. The output amounts
  to an additional space requirement of~$\order{\nverticesP \cdot
    \sizeboundedP}$.  Thus, we need a total amount of storage of
  $\order{\nverticesP^3 \cdot (\sizeboundedP)^2}$.
\end{remark}

\subsection{Polyhedra Given in Terms of Inequalities}

As mentioned above, the bounded subcomplex $\bounded{P}$ can be obtained by
removing from $\facelattice{\projbnd{P}}$ all \emph{unbounded} faces, that
is, faces that contain a vertex of~$\farface$.

If the defining inequalities of~$\projbnd{P}$ are given, that is,
$\projbnd{P} = \{x \suchthat A x \leq b\}$, the face lattice
of~$\projbnd{P}$ can be computed by an algorithm of Fukuda et
al.~\cite{FukLM97} in time $\order{\nfacetsbndP \cdot \ell(A,b) \cdot
  \sizeflP}$, where $\ell(A,b)$ is the time to solve a linear program with
input size equal to the size of~$A$ and~$b$\,; its space complexity is
$\order{\sizeflP \cdot \log \nfacetsbndP + p(A,b)}$, where $p(A,b)$ is the
space needed to solve a linear program of the size of~$A$ and~$b$.  This
algorithm outputs the faces as the sets of facets they are contained in. It
is also possible to apply the algorithm to the unbounded polyhedron~$P$.
Note that the algorithm does not produce the Hasse diagram, but it can be
computed using the algorithm in~\cite{KP}.

After the generation of $\facelattice{\projbnd{P}}$, one computes the set of
vertices of~$\farface$ and removes the unbounded faces. Additional work is
necessary if the faces of~$\bounded{P}$ should be given by their vertex sets.
This approach via the entire face lattice of~$\projbnd{P}$ is not efficient
when the bounded subcomplex $\bounded{P}$ is much smaller.  It is not obvious
to the authors whether or not the algorithm from~\cite{FukLM97} can be
modified to compute the bounded faces only.

The above algorithm avoids the explicit computation of the vertex-facet
incidences (with unknown complexity) and leads to a polynomial total time
algorithm if~$\sizeboundedP \approx \sizeflP$. In practice or if
$\sizeboundedP \ll \sizeflP$, Algorithm~\ref{algo:selective} might be
faster.

\subsection{Polyhedra Given in Terms of Vertices and Rays}

If the unbounded polyhedron is given by the list of its vertices and rays, one
can proceed analogously to the previous section by applying the algorithm of
Fukuda et al.~\cite{FukLM97} to the dual of~$\projbnd{P}$. It is also easy to
adapt their algorithm to work with vertices as input (their ``restricted face of
polyhedron'' problem can also be solved via a linear program in this case). The
faces are then given by their vertices and rays.  This approach has the same
drawbacks as the one in the previous section and can also be applied directly
to~$P$.

The affine hull of each bounded $k$-face of $P$ is spanned by $k+1$
affinely independent vertices.  For each $(k+1)$-tuple of the
$\nverticesP$ vertices of $P$, one can solve one linear program to
decide whether there exists a supporting hyperplane containing those
$k+1$ vertices and whose intersection with $P$ is $k$-dimensional.
This immediately yields the following.

\begin{proposition}
  Let $P$ be given in terms of vertices and rays, and let $\delta$ be a fixed
  constant.  Then the set of bounded faces up to dimension $\delta$ can be
  computed in $\order{\nverticesP^\delta\cdot\ell(\vertices,\text{\rm const})}$ time.
\end{proposition}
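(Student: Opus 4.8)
The plan is to reduce the computation to a bounded number (for the fixed $\delta$) of enumeration loops, each step of which is decided by a single linear feasibility problem, exploiting the fact that a bounded face is determined by its affine hull. Recall that every face $F$ of a polyhedron satisfies $F = P \cap \operatorname{aff}(F)$; hence distinct faces have distinct affine hulls, and $\vertexset(F) = \vertices \cap \operatorname{aff}(F)$ for every face $F$. Since a bounded $k$-face contains $k+1$ affinely independent vertices spanning its affine hull, the following enumeration is complete: for $k = 0, 1, \dots, \delta$, run over all $(k+1)$-subsets $S \subseteq \vertices$; discard $S$ unless it is affinely independent (a rank computation); otherwise decide, by one linear program, whether $\operatorname{aff}(S)$ is the affine hull of a bounded $k$-face of $P$. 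The empty face is bounded and is added at the outset. A given face may be produced from several subsets, but duplicates are detected by comparing the vertex sets $\vertices \cap \operatorname{aff}(S)$, so the concluding deduplication adds only a lower-order polynomial amount of work.

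The core of the argument is the linear program attached to an affinely independent $S = \{v_0, \dots, v_k\}$; write $P = \conv(\vertices) + \operatorname{cone}(\rays)$. One asks whether there exist $a \in \RR^{d}$ and $\beta \in \RR$ with $\langle a, v_i\rangle = \beta$ for $i = 0, \dots, k$ (so the hyperplane $H = \{x : \langle a, x\rangle = \beta\}$ contains $\operatorname{aff}(S)$), $\langle a, v\rangle \le \beta$ for all $v \in \vertices$ (so $H$ supports $P$), $\langle a, v\rangle < \beta$ for all $v \in \vertices \setminus \operatorname{aff}(S)$ (so no further vertex lies on $H$), and $\langle a, r\rangle < 0$ for all $r \in \rays$ (so no ray is parallel to $H$). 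If this system has a solution, then $H \cap P = \conv(\vertices \cap \operatorname{aff}(S))$: it is bounded, contains the affinely independent set $S$, and lies in $\operatorname{aff}(S)$, hence is a bounded $k$-face with affine hull $\operatorname{aff}(S)$. Conversely, a supporting hyperplane exposing a bounded $k$-face whose affine hull equals $\operatorname{aff}(S)$ yields such a solution: boundedness of the exposed face forces $\langle a, r\rangle \neq 0$ for every ray, and a vertex outside $\operatorname{aff}(S)$ cannot lie on a supporting hyperplane meeting $P$ exactly in that face. The constraint system is homogeneous in $(a, \beta)$, so it is feasible if and only if the linear program obtained by replacing each strict inequality $\langle a, v\rangle < \beta$ by $\langle a, v\rangle \le \beta - 1$ and each $\langle a, r\rangle < 0$ by $\langle a, r\rangle \le -1$ is feasible; this is a single LP whose constraint data consists (essentially) of the vertex and ray lists plus at most $\delta + 1 = \order{1}$ further rows, hence solvable in $\ell(\vertices, \mathrm{const})$ time.

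For the running time, the number of subsets examined is $\sum_{k=0}^{\delta} \binom{\nverticesP}{k+1}$, which is a polynomial in $\nverticesP$ because $\delta$ is a fixed constant; each subset costs one affine-independence test, the computation of $\vertices \cap \operatorname{aff}(S)$, and one linear program of the size described above, the first two being dominated by the cost $\ell(\vertices, \mathrm{const})$ of the third, while the closing deduplication (which sorts the per-face records) is likewise of lower order. Multiplying the number of subsets by $\ell(\vertices, \mathrm{const})$ gives the claimed bound. The step I expect to require the most care is arranging the linear program so that its feasibility is \emph{exactly} equivalent to ``$S$ spans a bounded $k$-face'': one must cut the exposed face down to dimension $k$ (via the strict inequalities at the vertices off $\operatorname{aff}(S)$) and at the same time force it to be bounded (via the strict inequalities at the rays), and then justify the passage from the strict to the non-strict formulation; a secondary point is to verify that the computation of the sets $\vertices \cap \operatorname{aff}(S)$ and the deduplication do not dominate the overall running time.
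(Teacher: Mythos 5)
Your proposal takes exactly the route the paper does: enumerate small subsets of vertices as candidate affine hulls of bounded low-dimensional faces, and decide each candidate by a single linear feasibility problem. The paper's own justification is the terse paragraph immediately preceding the proposition, and your write-up usefully fills in what that paragraph leaves implicit -- in particular, the precise LP (equalities on $S$, non-strict support inequalities on all vertices, strict inequalities on vertices off $\operatorname{aff}(S)$ to cut the face to dimension $k$, and strict inequalities on the rays to force boundedness) and the standard homogeneity argument for replacing strict by non-strict inequalities. One small point worth flagging: as you yourself compute, the enumeration runs over $\sum_{k=0}^{\delta}\binom{\nverticesP}{k+1}=\order{\nverticesP^{\delta+1}}$ subsets, so the argument actually yields $\order{\nverticesP^{\delta+1}\cdot\ell(\vertices,\text{const})}$ rather than the stated $\order{\nverticesP^{\delta}\cdot\ell(\vertices,\text{const})}$; this off-by-one in the exponent is already present in the paper's own statement (a $k$-face needs $k+1$ spanning vertices, so dimensions up to $\delta$ require $(\delta+1)$-subsets), and your closing sentence "gives the claimed bound" glosses over it.
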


This algorithm does not produce the Hasse diagram of the $\delta$-skeleton
of~$\bounded{P}$ directly, but this can be achieved via the algorithm
in~\cite{KP}.  Alternative algorithms would result from computing the facets
and the corresponding incidences and then applying the previously mentioned
algorithms.

Note that Algorithms~\ref{algo:selective} and~\ref{algo:moebius} can easily
be modified to produce the $\delta$-skeleton as well.

\section{Simple Polyhedra}
\label{sec:Simple}

\noindent
In this section, we deal with the special case of simple polyhedra.  A
pointed $d$-dimensional polyhedron $P$ is \emph{simple} if each vertex is
contained in precisely~$d$ facets. Note that $\projbnd{P}$ may not be
simple, even if~$P$ is. Nevertheless, using a suitable generic construction
one can guarantee that the corresponding polytope is simple,
see~\cite[Prop.~2.2]{HJ}.

For simple polyhedra, Algorithm~\ref{algo:selective} can be implemented
more efficiently. Step~\ref{line:G} can be performed in time~$\order{d
  \cdot \alpha}$, and the for-loop is executed at most~$\order{d \cdot
  \sizeboundedP}$ times, see~\cite{KP} for more details.  Note that the
number of vertex-facet incidences is $\alpha = d \cdot \nverticesbndP$.
Thus, we obtain the following.

\begin{proposition}
  Given a simple polyhedron~$P$ and the vertex-facet incidences
  of $\projbnd{P}$, the (Hasse diagram of the) bounded subcomplex of~$P$
  can be computed in time $\order{d \cdot \alpha \cdot \sizeboundedP} =
  \order{d^2 \cdot \nverticesbndP \cdot \sizeboundedP}$ time.
\end{proposition}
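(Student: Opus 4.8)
The plan is to verify that the running-time bound of Theorem~\ref{thm:selectiveGeneration} specializes as claimed once one plugs in the structural improvements that simplicity affords. First I would recall the general bound $\order{\nverticesbndP \cdot \alpha \cdot \sizeboundedP}$ from Theorem~\ref{thm:selectiveGeneration}, and then re-examine the two places in the running-time analysis of Algorithm~\ref{algo:selective} where the factor $\nverticesbndP$ entered. These were: (i) the cost of generating the set $\mathcal{G}$ of covering faces in Step~\ref{line:G}, which in the general case was $\order{\nverticesbndP^2}\subseteq\order{\nverticesbndP\cdot\alpha}$; and (ii) the bound on the out-degree of each node in the Hasse diagram of $\projbnd{P}$, which controls how often the for-loop in Step~\ref{line:ForG} runs, and which was $\nverticesbndP$ in general.

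Next I would invoke the two facts about simple polyhedra stated in the paragraph preceding the proposition (citing \cite{KP} for both): in the simple case, Step~\ref{line:G} can be carried out in $\order{d\cdot\alpha}$ time rather than $\order{\nverticesbndP\cdot\alpha}$, and each face has at most $\order{d}$ covering faces, so the for-loop in Step~\ref{line:ForG} executes at most $\order{d\cdot\sizeboundedP}$ times in total rather than $\order{\nverticesbndP\cdot\sizeboundedP}$ times. With these substitutions, the two dominant contributions to the running time become: from Steps~\ref{line:choose} and~\ref{line:G}, $\order{d\cdot\alpha\cdot\sizeboundedP}$; and from the body of the for-loop, whose per-iteration cost is still dominated by the $\order{\alpha}$ of the face-tree lookup in Step~\ref{line:locateG} (Lemma~\ref{lemma:face_tree}) together with the $\order{\nverticesbndP}\subseteq\order{\alpha}$ boundedness test, we get $\order{d\cdot\alpha\cdot\sizeboundedP}$ as well. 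Summing yields the first claimed bound $\order{d\cdot\alpha\cdot\sizeboundedP}$.

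For the second equality I would simply substitute the identity $\alpha = d\cdot\nverticesbndP$, which holds because in a simple $d$-polyhedron every vertex lies on exactly $d$ facets, so the number of vertex-facet incidences is exactly $d$ times the number of vertices. (Strictly speaking, the relevant polytope here is the projective closure $\projbnd{P}$, which need not be simple even when $P$ is; but, as the section notes, one may replace $\projbnd{P}$ by a simple polytope of the same combinatorial type for the bounded part via \cite[Prop.~2.2]{HJ}, so the incidence count $\alpha = d\cdot\nverticesbndP$ applies.) This gives $\order{d\cdot(d\cdot\nverticesbndP)\cdot\sizeboundedP} = \order{d^2\cdot\nverticesbndP\cdot\sizeboundedP}$, completing the proof.

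The proof is essentially bookkeeping, so there is no serious obstacle; the one point that deserves a sentence of care is the interplay between $P$ and $\projbnd{P}$ — ensuring that ``simple'' can be transferred to the polytope actually handed to the algorithm, so that the improved bounds on Step~\ref{line:G} and the out-degree, as well as the incidence count $\alpha = d\cdot\nverticesbndP$, are all legitimately available. Once that is granted, the rest is a direct re-run of the analysis in Theorem~\ref{thm:selectiveGeneration} with $\nverticesbndP$ replaced by $d$ in the two marked spots.
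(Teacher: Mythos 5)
Your proposal is correct and matches the paper's argument essentially verbatim: both invoke the simple-case improvements from~\cite{KP} (Step~\ref{line:G} in $\order{d\cdot\alpha}$ and the for-loop executed $\order{d\cdot\sizeboundedP}$ times), keep the $\order{\alpha}$ per-iteration cost from the face-tree lookup, and finish by substituting $\alpha = d\cdot\nverticesbndP$. Your explicit caveat about transferring simplicity from $P$ to $\projbnd{P}$ via \cite[Prop.~2.2]{HJ} is the same point the paper makes in the paragraph opening Section~\ref{sec:Simple}, so it is a matter of emphasis rather than a new idea.
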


If a simple polyhedron $P$ is given in terms of inequalities the reverse
search algorithm by Avis and Fukuda~\cite{AF} generates the vertices of~$P$
in~$\order{d \cdot \nfacetsP}$ time per vertex, where~$d$ is the dimension
of~$P$.  This is possible by performing a ratio test that decides whether
an edge is unbounded or not; see Avis~\cite{Avi00a} for details.  During
the vertex enumeration, the vertex-facet incidences of~$P$
and~$\projbnd{P}$ can be computed on-the-fly at no extra cost. Using the
incidences for~$\projbnd{P}$, we get the following.

\begin{corollary}
  Given a simple polyhedron~$P$ in inequality form, (the Hasse diagram of)
  $\bounded{P}$ can be computed in time $\order{\max \{\nfacetsbndP,\; d
    \cdot \sizeboundedP \} \cdot d \cdot \nverticesbndP}$.
\end{corollary}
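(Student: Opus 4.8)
The plan is to combine the running-time bound from the preceding Proposition with the cost of producing the required input, namely the vertex-facet incidences of $\projbnd{P}$, from the inequality description of the simple polyhedron $P$. First I would invoke the Avis–Fukuda reverse search algorithm \cite{AF}: given $P$ in inequality form, it enumerates the $\nverticesbndP$ vertices of $P$ (equivalently, the bounded vertices of $\projbnd{P}$) in $\order{d \cdot \nfacetsP}$ time per vertex, where the ratio test of \cite{Avi00a} is what distinguishes bounded from unbounded edges and thereby lets us recognize the far face $\farface$ along the way. Since $\projbnd{P}$ has $\nfacetsbndP$ facets, and $P$ has $\nfacetsP \le \nfacetsbndP$ facets (the far face contributes one additional facet), the vertex-enumeration phase costs $\order{d \cdot \nfacetsbndP \cdot \nverticesbndP}$ in total, and during this phase the vertex-facet incidence matrix of $\projbnd{P}$ is assembled at no asymptotic overhead, as noted in the text.

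Next I would feed these incidences into Algorithm~\ref{algo:selective}, whose running time for a simple polyhedron is $\order{d \cdot \alpha \cdot \sizeboundedP} = \order{d^2 \cdot \nverticesbndP \cdot \sizeboundedP}$ by the Proposition, using $\alpha = d \cdot \nverticesbndP$. It then remains to add the two phases: the total is
\[
  \order{d \cdot \nfacetsbndP \cdot \nverticesbndP} + \order{d^2 \cdot \nverticesbndP \cdot \sizeboundedP}
  \ = \ \order{\max\{\nfacetsbndP,\; d \cdot \sizeboundedP\} \cdot d \cdot \nverticesbndP},
\]
which is exactly the claimed bound. The rewriting in the last step is purely formal: pulling out the common factor $d \cdot \nverticesbndP$ and bounding the sum of two nonnegative terms by a constant times their maximum.

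I do not expect any genuine obstacle here; the statement is essentially a bookkeeping corollary. The one point that deserves a sentence of care is making explicit that the far face $\farface \subset \vertices$ — which is the second input required by Algorithm~\ref{algo:selective} — is available as a byproduct of reverse search: a vertex of $\projbnd{P}$ lies in $\farface$ precisely when it arises as the "endpoint at infinity" of an unbounded edge of $P$ detected by the ratio test, so no separate computation is needed. With that observed, the proof is just the concatenation of the two cited running times and the elementary $\max$-bound.
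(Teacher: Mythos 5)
Your proposal matches the paper's own argument: reverse search enumerates vertices in $\order{d\cdot\nfacetsP}$ time per vertex with the ratio test distinguishing bounded from unbounded edges, the vertex-facet incidences of $\projbnd{P}$ are assembled on-the-fly, and then the preceding Proposition gives the $\order{d^2\cdot\nverticesbndP\cdot\sizeboundedP}$ bound for Algorithm~\ref{algo:selective}; summing and factoring yields the stated bound. One small imprecision: reverse search enumerates the $\nverticesP$ vertices of $P$ (not $\nverticesbndP$ vertices), so the first phase is really $\order{d\cdot\nfacetsP\cdot\nverticesP}$, but since $\nverticesP\le\nverticesbndP$ and $\nfacetsP\le\nfacetsbndP$ this is absorbed by the stated $\order{d\cdot\nfacetsbndP\cdot\nverticesbndP}$, and the conclusion is unaffected.
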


Kalai gave an algorithm to compute the vertex-facet incidences of a simple
polytope from its vertex-edge graph~\cite{Ka}.  This method can be modified to
compute the \emph{$f$-vector} $(f_0,f_1,\dots,f_{d-1})$ of the bounded
subcomplex of a simple $d$-polyhedron, where $f_k$ is the $k$-th \emph{face
  number}, that is, the number of faces of dimension
$k$. Algorithm~\ref{algo:simple:f} presents this approach.

\begin{algorithm}[tb]
  \dontprintsemicolon
  \linesnumbered
  
  \KwIn{inequality description of a \emph{simple}, pointed, unbounded
    polyhedron $P$, far face $\farface$}
  \KwOut{$(f_0, \dots, f_d) = f$-vector of $\bounded{P}$}

  compute the vertices and the vertex-edge graph $\Gamma$ of $\projbnd{P}$\;\nllabel{line:simple:rs}
  find~$c$ such that linear program
  $\max\smallSetOf{c^{\rm T}x}{x\in P}$ takes its maximum on $\farface$ and which is generic on
  vertices of~$\projbnd{P}$\;\nllabel{line:simple:lp}
  direct the edges of $\Gamma$ along increasing $c$\;
  $d$ $\leftarrow$ $\dim P$\;
  \For{$k$ $\leftarrow$ $d,d-1,\dots, 0$}{
    $h_k$ $\leftarrow$ number of vertices of $P$ with out-degree $k$\; \nllabel{line:simple:h}
    $\overline{h}_k^\infty$ $\leftarrow$ number of vertices of $\farface$ with
    in-degree $k$\;
    $f_k$ $\leftarrow$ $\sum_{i=k}^d\tbinom{i}{k}(h_i-\overline{h}_i^\infty)$  \nllabel{line:simple:f}
  }
  \caption{Face numbers of simple polyhedra}
  \label{algo:simple:f}
\end{algorithm}

\begin{theorem}
  Algorithm~\ref{algo:simple:f} computes the face numbers of a simple
  polyhedron in $\order{d \cdot \nfacetsbndP \cdot \nverticesbndP}$ time.
\end{theorem}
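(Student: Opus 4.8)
The plan is to account for the cost of each line of Algorithm~\ref{algo:simple:f} and check that the asserted formula in Step~\ref{line:simple:f} is correct. First I would verify correctness of the $f$-vector computation. By the result of~\cite{HJ} (the unbounded analogue of Blind--Mani), the combinatorics of $\bounded{P}$ is encoded in the directed vertex-edge graph of $P$, and more precisely in the directed graph of $\projbnd{P}$ once we single out a generic linear functional $c$ that attains its maximum on $\farface$. Orienting $\Gamma$ along increasing $c$, each vertex $v$ of $\projbnd{P}$ has a well-defined out-degree. A bounded $k$-face of $P$ has a unique $c$-maximal vertex, and a vertex of $\projbnd{P}$ which does not lie on $\farface$ is the top vertex of exactly $\binom{i}{k}$ bounded $k$-faces if it has out-degree $i$ (the usual simple-polytope counting argument, applied locally at the vertex: the $i$ out-edges span an $i$-dimensional ``upper'' cone, and every $k$-subset of them determines a $k$-face, which is bounded precisely when the vertex is not on $\farface$). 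Summing $h_i - \overline{h}_i^\infty$ over $i \ge k$ with weights $\binom{i}{k}$ then yields $f_k$; this is exactly Step~\ref{line:simple:f}. I would make sure the subtraction of $\overline{h}_i^\infty$ (vertices of $\farface$ counted by in-degree, since $c$ is maximized on $\farface$ so those vertices are sinks from the far side) correctly removes the contribution of faces meeting infinity.

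Next I would bound the running time line by line. Step~\ref{line:simple:rs}, computing the vertices and the vertex-edge graph of $\projbnd{P}$, is done by reverse search \cite{AF}, which costs $\order{d \cdot \nfacetsbndP}$ per vertex, hence $\order{d \cdot \nfacetsbndP \cdot \nverticesbndP}$ in total; this is the dominant term. Step~\ref{line:simple:lp}, finding a suitable generic $c$, amounts to solving a linear program (or a small perturbation thereof) and costs $\order{\ell(A,b)}$, which is absorbed; here I would, following the convention in the introduction, note the size of the LP explicitly rather than hide it. Orienting the edges of $\Gamma$ takes time linear in the number of edges, which is $\order{d \cdot \nverticesbndP}$ since $\projbnd{P}$ is simple (each vertex has degree $d$), again absorbed. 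The final loop runs $d+1$ times; within it, computing each $h_k$ and $\overline{h}_k^\infty$ requires one pass over the vertices, i.e.\ $\order{d \cdot \nverticesbndP}$ total over all $k$ (or $\order{\nverticesbndP}$ if out-degrees are tabulated once), and evaluating the sum in Step~\ref{line:simple:f} costs $\order{d}$ per $k$, hence $\order{d^2}$ overall -- all dominated by Step~\ref{line:simple:rs}. Collecting terms gives the claimed $\order{d \cdot \nfacetsbndP \cdot \nverticesbndP}$.

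The main obstacle I anticipate is the correctness argument for Step~\ref{line:simple:f}, specifically pinning down the exact statement that $\projbnd{P}$ (rather than $P$ itself) has the ``right'' local structure at each vertex so that the $h$-vector style count goes through. Since $\projbnd{P}$ need not be simple (as the excerpt explicitly warns), I would instead invoke the generic construction of~\cite[Prop.~2.2]{HJ} guaranteeing a simple projective closure, and then carry out the count on that simple polytope; the subtlety is to check that the far face $\farface$ and the functional $c$ behave compatibly under that construction, so that ``vertices not on $\farface$'' and ``in-degree of $\farface$-vertices'' mean what the formula needs. The remaining steps -- the reverse-search time bound and the arithmetic of the loop -- are routine given \cite{AF} and the simplicity of the graph, and I would state them briefly, flagging the LP size in Step~\ref{line:simple:lp} for the bit-complexity-minded reader as promised in the introduction.
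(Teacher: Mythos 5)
Your running-time accounting is fine and matches the paper's proof, but the correctness argument you give for the formula in Step~\ref{line:simple:f} is wrong on two counts. You say a bounded $k$-face has a unique $c$-maximal vertex, and then that a vertex $v\notin\farface$ of out-degree $i$ is ``the top vertex of exactly $\tbinom{i}{k}$ bounded $k$-faces,'' the faces being ``bounded precisely when the vertex is not on $\farface$.'' First, out-degree governs the faces for which $v$ is the $c$-\emph{minimum}, not the $c$-maximum: the $i$ out-arcs point toward increasing $c$, so a $k$-subset of them spans a $k$-face whose bottom is $v$. Second, and more seriously, such a face need not be bounded just because its bottom vertex $v$ is off $\farface$: every unbounded edge of $P$ has its $c$-minimum at a $P$-vertex (off $\farface$) and its $c$-maximum on $\farface$, and the same happens in every dimension. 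So ``bounded iff the bottom is off $\farface$'' is false, and the local counting argument at a single vertex does not establish Step~\ref{line:simple:f}.

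The paper's argument is a global double count with subtraction, not a local one. The sum $\sum_i\tbinom{i}{k}h_i$ (out-degrees over \emph{all} vertices of $\projbnd{P}$) counts \emph{all} $k$-faces of $\projbnd{P}$ by their unique $c$-minimal vertex. The sum $\sum_i\tbinom{i}{k}\overline{h}_i^\infty$ (in-degrees over the $\farface$-vertices) counts $k$-faces by their unique $c$-\emph{maximal} vertex, restricted to those whose maximum sits on $\farface$; by the choice of $c$ (after genericizing, $\farface$ lies strictly above the rest of $\projbnd{P}$, so arcs across the boundary of $\farface$ all point into $\farface$), these are exactly the $k$-faces that meet $\farface$. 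Subtracting removes precisely the unbounded $k$-faces of $P$ and the faces inside $\farface$, leaving the bounded $k$-faces. Note in particular that $h_i-\overline{h}_i^\infty$ is not itself the number of anything at a fixed vertex: one tally uses out-degrees of all vertices, the other in-degrees of only the far vertices, and only their $\tbinom{i}{k}$-weighted difference has a combinatorial meaning. Your concern about $\projbnd{P}$ possibly failing to be simple is legitimate and is not revisited in the paper's proof; invoking the simple projective closure of \cite[Prop.~2.2]{HJ} and checking compatibility of $\farface$ and $c$ under that construction is the right way to make Step~\ref{line:simple:rs} and the $h$-count rigorous.
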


\begin{proof}
  The reverse search algorithm~\cite{AF} produces the graph~$\Gamma$
  of~$\projbnd{P}$ in time $\order{d \cdot \nfacetsbndP \cdot
    \nverticesbndP}$. The running time of Step~\ref{line:simple:rs}
  dominates the remaining steps.

  Knowing the vertices of $\projbnd{P}$ and the far face is equivalent to
  knowing the vertices and rays of $P$.  Each ray describes a direction in
  which $P$ is unbounded.  Such a ray can be perturbed such that the
  corresponding linear objective function $c$ takes distinct values on
  distinct vertices (see Step~\ref{line:simple:lp}).

  The numbers $(h_0,h_1,\dots,h_d)$ computed in Step~\ref{line:simple:h}
  form the \emph{$h$-vector} of~$\projbnd{P}$, compare~\cite{Ka}.  Its
  relationship with the $f$-vector, expressed in Step~\ref{line:simple:f},
  is as follows: Each $k$-face of $\projbnd{P}$ has a unique minimal vertex
  with respect to $c$.  Conversely, each $k$-set of arcs which leave a
  fixed vertex $v$ spans a $k$-face such that $v$ the minimum with respect
  to $c$ is attained at $v$.  We have to ignore the unbounded $k$-faces of
  $P$, which are precisely the $k$-faces of $\projbnd{P}$ whose maximum
  with respect to $c$ is attained at some vertex of $\farface$.
\end{proof}

The formula in \cite[Prop.~2.4]{HJ} for the relationship between the $f$-
and $h$-vectors of an unbounded polyhedron is wrong.  The correct version
is in Step~\ref{line:simple:f} of Algorithm~\ref{algo:simple:f}.

\section{Computational Results}
\label{sec:ComputationalResults}

The following experiments were performed with the \polymake system \cite{GJ},
version 2.9.8.  The hardware used was an AMD Athlon 64 X2 Dual Core Processor
4200 (4435.84 bogomips) with 4GB main memory running Debian Linux.  We tested
Algorithm~\ref{algo:selective} requiring the vertex-facet incidences of
$\projbnd{P}$ as input only.  The timings for the convex hull computations
required are not given since discussing the various choices is a topic of its
own; see \cite{AviBS97,Jos03}.  Here we are focusing on the combinatorial
aspects.  Usually the convex hull computation takes much less time than the
computation of the face lattice.

The goals of our computations are threefold.
\begin{dense_itemize}
\item The limits of our algorithm can be estimated.
\item Several examples for which the computation of~$\bounded{P}$ is
  interesting are investigated and corresponding results are presented.
\item A rough estimation of the sizes up to which a computation
  of~$\bounded{P}$ is sensible, independent of the
  approach, can be derived (for our examples).
\end{dense_itemize}

Our theoretical analysis so far was based on the assumption that sets are
stored as bitsets; \polymake offers a suitable data type (wrapping an
implementation of the GMP~\cite{GMP}). However, the representation of sets
via balanced trees seems to be superior in a typical scenario.  This is the
one used in the tests below.

\subsection{Dwarfed Cubes}

The \emph{dwarfed $d$-cube} is the polytope
\[
\projbnd{D} := \SetOf{x\in[0,1]^d}{\sum x_i\le\tfrac{3}{2}} \, .
\]
The polytope~$\projbnd{D}$ has $\nfacetsbndP = 2d+1$ facets and
$\nverticesbndP = d^2+1$ vertices.  Moreover, $\alpha = d \cdot
\nverticesbndP$ as the polytope is simple.  The interest in these polytopes
comes from the fact that the dwarfed cubes provide difficult input to some
classes of convex hull algorithms, see Avis et al.~\cite{AviBS97}.  To
produce an unbounded polyhedron $D$, we send the \emph{dwarfing facet}
$\sum x_i=3/2$ to infinity by reversing the construction from
Proposition~\ref{prop:proj}.  The dwarfing facet contains $d(d-1)$
vertices; hence the bounded subcomplex has $\nverticesP=d^2+1-d(d-1)=d+1$
vertices and $\beta=\nverticesP\cdot d=d^2+d$ incidences between vertices
and facets.  The bounded subcomplex $\bounded{D}$ is a star-like graph with
$d+1$ nodes (and $d$ edges); that is, the number $\sizeboundedP$ of bounded
faces equals $2d+2$, including the empty face.

Table~\ref{tab:dwarfed_cubes} contains the results of the computations.  They
show that $\bounded{P}$ could be computed up to a very high dimension ($\geq
70$) with a moderate computing time; the key reason for this seems to be the
small number of bounded faces~$\sizeboundedP$.

\begin{remark}\label{rem:dwarfed_size}
  In order to count the faces of the (unbounded) dwarfed cube~$D$ we can
  employ Algorithm~\ref{algo:simple:f}: To this end consider the linear
  objective function $\sum x_i$ which is generic on the unbounded dwarfed
  cube, and which takes its maximum on the dwarfing facet.  This linear
  objective function gives a direction on each bounded edge or ray of $D$.
  The origin is the unique node of out-degree $d$, making up for $2^d$
  non-empty faces of $D$ whose minimum with respect to $\sum x_i$ is $0$.  The
  $d$ neighbors of $0$, the $d$ unit vectors, are nodes of out-degree $d-1$,
  making up for an additional $d\cdot 2^{d-1}$ non-empty faces.  Thus
  $\sizefl{D} = 2^d + d\cdot 2^{d-1} + 1$ (including~$\varnothing$ and~$D$).

  The poset $\vertposet{D}$ contains $2^d - 1$ elements for faces of the first
  kind above (not containing the face $D$ itself) and~$d$ elements
  corresponding to the~$d$ unit vectors.  Thus $\sizeVertexPosetP = 2^d + d$
  (including~$\varnothing$).  In contrast, $\sizeboundedP$ is only linear
  in~$d$.
\end{remark}

\begin{table}
  \caption{Results for Dwarfed cubes}
  \label{tab:dwarfed_cubes}
  \renewcommand{\arraystretch}{0.9}
  \begin{tabular*}{.8\linewidth}{@{\extracolsep{\fill}}rrrrrr@{}}
    \toprule
    \multicolumn{1}{r}{$d$} & \multicolumn{1}{r}{$\nfacetsbndP$} &
    \multicolumn{1}{r}{$\nverticesbndP$} & \multicolumn{1}{r}{$\alpha$} &
    \multicolumn{1}{r}{$\sizeboundedP$} & \multicolumn{1}{r@{}}{time (s)}\\
    \midrule
5 & 11 & 26 & 130 & 12 & 0.07\\
10 & 21 & 101 & 1010 & 22 & 0.11\\
15 & 31 & 226 & 3390 & 32 & 0.42\\
20 & 41 & 401 & 8020 & 42 & 1.87\\
25 & 51 & 626 & 15650 & 52 & 6.89\\
30 & 61 & 901 & 27030 & 62 & 28.35\\
35 & 71 & 1226 & 42910 & 72 & 72.93\\
40 & 81 & 1601 & 64040 & 82 & 159.64\\
45 & 91 & 2026 & 91170 & 92 & 324.86\\
50 & 101 & 2501 & 125050 & 102 & 593.73\\
55 & 111 & 3026 & 166430 & 112 & 1039.30\\
60 & 121 & 3601 & 216060 & 122 & 1743.58\\
65 & 131 & 4226 & 274690 & 132 & 2811.10\\
70 & 141 & 4901 & 343070 & 142 & 4457.96\\
75 & 151 & 5626 & 421950 & 152 & 6823.86\\
    \bottomrule
  \end{tabular*}
\end{table}

\subsection{Tight Spans of Metric Spaces}

The \emph{tight span} (or injective hull) $T_M$ of a finite metric space
$M : [d]\times [d]\to \RR$ (see Dress~\cite{Dress} and Isbell~\cite{Is}) is
defined as the bounded complex of the polyhedron
\[
P_M = \SetOf{x\in \RR^d}{x_i + x_j \geq M(i,j) \text{ for all } 1\leq i,j\leq
  d}\,.
\]
If~$M$ is generic enough, as in our examples, all inequalities define
facets, hence $\nfacetsbndP = \frac{d(d+1)}{2} + 1$ (including~$\farface$).
It was remarked by Sturmfels and Yu~\cite{SY} that $T_M$ is dual to the
complex of inner faces of the regular subdivision of the \emph{second
  hypersimplex}
\[
\Delta(2,d):=\conv\SetOf{e_i+e_j}{1\leq i\leq j\leq d},
\]
obtained from interpreting $M$ as a height function.  Tight spans are
relevant for applications in algorithmic biology, more precisely in
phylogenetics, cf.\ Dress et al.~\cite{DMT,Dress}.

Two ways to obtain special metric spaces and corresponding examples are
described next.

\subsubsection{Thrackle Metric}

A special triangulation of $\Delta(2,d)$, called the \emph{thrackle
  triangulation} was introduced by Stanley~\cite{Stanley} and thoroughly
investigated by De Loera et al.~\cite{DeLoeraSturmfelsThomas95}. It turns
out that the corresponding metric maximizes the number of faces of the
tight span \cite{HJ} and is equivalent to the tight span of the
\emph{maximal circular split system}, see Bandelt and
Dress~\cite[Section~3]{BandeltDress92}. From~\cite[Theorem~5.5]{HJ}, we
deduce that $\nverticesbndP=2^{d-1}+d$ and
\[
\sizeboundedP = \frac12\left((1-\sqrt 2)^d+(1+\sqrt 2)^d\right)+1.
\]
Note that the right hand side is always integral.

\begin{table}
  \caption{Results for thrackle metrics}
  \label{tab:ThrackleMetric}
  \renewcommand{\arraystretch}{0.9}
  \begin{tabular*}{.8\linewidth}{@{\extracolsep{\fill}}rrrrrr@{}}
    \toprule
    \multicolumn{1}{r}{$d$} & $\nfacetsbndP$ & $\nverticesbndP$ & $\alpha$ & $\sizeboundedP$ &time (s)\\
    \midrule
    3 & 7&7&24&8&0.01\\
    4 & 11&12 &60&18&0.01\\
    5 & 16&21&135&42&0.01\\
    6 & 22&38&288&100&0.03\\
    7 & 29&71&602&240&0.15\\
    8 & 37&136&1248&578&0.96\\
    9 & 46&265&2565&1394& 7.38\\
    10 & 56&522&5210&3364&61.90\\
    11 & 67&1035&10450&8120&559.08\\
    12 & 79&2060&20736&19602&5239.04\\
    13 & 92&4109&40820&47322&54302.46\\
    \bottomrule
  \end{tabular*}
\end{table}

Table~\ref{tab:ThrackleMetric} shows the results. Not surprisingly, it
turns out that with increasing dimension the the computation time
drastically increases along with the number of bounded faces.

\subsubsection{Random Metrics}

Our random metrics are obtained by taking the distances $M(i,j)$ to be
uniformly distributed in the interval $[1,2]$. Since this is generic (with
probability~$1$), we have $\nfacetsbndP=\frac{d(d+1)}{2} +1$. The sample
size for each dimension where 100 metrics. For $\nverticesbndP$, $\alpha$,
and $\sizeboundedP$, we state the arithmetic mean, and for the computation
time the mean together with the standard deviation.

\begin{table}
  \caption{Results for random metrics}
  \label{tab:RandomMetrics}
  \renewcommand{\arraystretch}{0.9}
  \begin{tabular*}{.9\linewidth}{@{\extracolsep{\fill}}rrrrrrr@{}}
    \toprule
    $d$ & $\nfacetsbndP$ & $\nverticesbndP$ & $\alpha$ & ${\sizeboundedP}$ & time (s) & stddev (s)\\\midrule
    5 & 16 & 21.00 & 135.00 & 42.00 & 0.03  & 0.00\\ 
    6 & 22 & 37.99 & 287.94 & 99.92 & 0.06  & 0.00\\ 
    7 & 29 & 70.65 & 599.55 & 237.20 & 0.19  & 0.01\\ 
    8 & 37 & 134.95 & 1247.60 & 568.96 & 1.17  & 0.09\\ 
    9 & 46 & 261.94 & 2609.46 & 1365.28 & 9.52  & 0.99\\ 
    10 & 56 & 513.55 & 5495.50 & 3275.68 & 86.69  & 9.69\\ 
    11 & 67 & 1008.46 & 11588.06 & 7802.88 & 841.02  & 109.44\\ 
    12 & 79 & 1997.28 & 24627.36 & 18709.52 & 9043.11  & 1351.32\\ 
    \bottomrule  
  \end{tabular*}
\end{table}

Table~\ref{tab:RandomMetrics} presents the results. Compared to the
thrackle metrics (Table~\ref{tab:ThrackleMetric}), random metrics have
 fewer bounded faces, but their computation times are larger.

\subsection{Tropical Polytopes}

Let $V=(v_{ik})$ be an $s \times t$-matrix with real coefficients.  We
define the polyhedron
\[
E_V \ := \ \SetOf{(u,w)}{u_i+w_k \le v_{ik}} \, ,
\]
where $u \in \RR^s$ and $w \in \RR^t$. Considering $(u,w)$ with
sufficiently small coordinates, one can see that $E_V$ is not empty.
Moreover, if $(u,w)\in E_V$ then for all $\lambda\in\RR$ we have
$(u+\lambda\vones,w-\lambda\vones)=(u,w)+\lambda(\vones,-\vones)\in E_V$.
Hence the one-dimensional subspace $\RR(\vones,-\vones)$ is contained in
the lineality space of $E_V$, and hence we can consider $E_V$ as a
polyhedron in the quotient $\RR^{s+t}/\RR(\vones,-\vones)$.  The polyhedron
$E_V$ in $\RR^{s+t}/\RR(\vones,-\vones)$ is pointed, and projecting its
bounded subcomplex to $\RR^t$ (or, alternatively, to $\RR^s$) yields the
\emph{tropical polytope} defined by $V$; see Develin and
Sturmfels~\cite{DS}.  The bounded subcomplex of $E_V$ is dual to the
regular subdivision of the product of simplices
$\Delta_{s-1}\times\Delta_{t-1}$, obtained by interpreting the matrix $V$
as a lifting function.  Here $\Delta_r$ denotes a simplex of dimension $r$.

We now translate the parameters for tropical polytopes into the parameters
that we used in our algorithms.  The dimension of the polyhedron $E_V$ (in
the quotient) equals $d=s+t-1$.  Its number $\nfacetsP$ of facets is less
than or equal to $s\cdot t$.  Throughout we have
$\nfacetsbndP=\nfacetsP+1$.  The number $\nverticesP$ of vertices satisfies
$\nverticesP\le\tbinom{s+t-2}{s-1}$.

\subsubsection{Tropical Cyclic Polytopes}

The \emph{tropical cyclic polytope} with parameters $(s,t)$ is given by the
matrix $V=(v_{ik})$ with $v_{ik} = i\cdot k$.  The corresponding
subdivision of $\Delta_{s-1}\times\Delta_{t-1}$ is known as the
\emph{staircase triangulation}; see Block and Yu \cite{BY}.  The polyhedron
$E_V$ is simple in this case.

\begin{table}
  \caption{Results for tropical cyclic polytopes}
  \label{tab:TropicalCyclic}
  \renewcommand{\arraystretch}{0.9}
  \begin{tabular*}{.8\linewidth}{@{\extracolsep{\fill}}rrrrrrr@{}}
    \toprule
    \multicolumn{1}{@{}r}{$(s,t)$} & \multicolumn{1}{r}{$d$} & \multicolumn{1}{r}{$\nfacetsbndP$} &
    \multicolumn{1}{r}{$\nverticesbndP$} & \multicolumn{1}{r}{$\alpha$} &
    \multicolumn{1}{r}{$\sizeboundedP$} & \multicolumn{1}{r@{}}{time (s)}\\
    \midrule
    (3,3) & 5 & 10 & 12 & 72 & 14 & 0.04\\
    (4,4) & 7 & 17 & 28 & 244 & 64 & 0.04\\
    (5,5) & 9 & 26 & 80 & 840 & 322 & 0.40\\
    (6,6) & 11 & 37 & 264 & 3144 & 1684 & 17.52\\
    (7,7) & 13 & 50 & 938 & 12614 & 8990 & 1198.35\\
    (8,8) & 15 & 65 & 3448 & 52392 & 48640 & 139091.23\\
    \addlinespace
    (3,10) & 12 & 31 & 68 & 1003 & 182 & 0.22\\
    (3,20) & 22 & 61 & 233 & 5903 & 762 & 9.17\\
    (3,30) & 32 & 91 & 498 & 17703 & 1742 & 110.16\\
    (3,40) & 42 & 121 & 863 & 39403 & 3122 & 814.14\\
    (3,50) & 52 & 151 & 1328 & 74003 & 4902 & 4418.48\\
    (3,60) & 62 & 181 & 1893 & 124503 & 7082 & 15595.14\\
    (3,65) & 67 & 196 & 2213 & 156653 & 8322 & 26858.18\\
    (3,70) & 72 & 211 & 2558 & 193903 & 9662 & 44651.71\\
    \bottomrule
  \end{tabular*}
\end{table}

\subsubsection{Tropical Permutohedra}

Each permutation $\sigma$ on the $t$ numbers from $0$ to $t-1$ can be
identified with the vector $(\sigma(0),\sigma(1),\dots,\sigma(t-1))$.  This
way each permutation contributes one row of a $(t!)\times t$-matrix $V$.  We
call the corresponding tropical polytope a \emph{tropical permutohedron}.  The
polyhedron~$E_V$ is not simple for $t\ge 3$.

\begin{table}[hbt]
 \caption{Tropical permutohedra}
  \label{tab:trop_permutohedra}
  \renewcommand{\arraystretch}{0.9}
  \begin{tabular*}{.8\linewidth}{@{\extracolsep{\fill}}rrrrrrr@{}}
    \toprule
    \multicolumn{1}{r}{$(s,t)$} & \multicolumn{1}{r}{$d$} & \multicolumn{1}{r}{$\nfacetsbndP$} &
    \multicolumn{1}{r}{$\nverticesbndP$} & \multicolumn{1}{r}{$\alpha$} &
    \multicolumn{1}{r}{$\sizeboundedP$} & \multicolumn{1}{r@{}}{time (s)}\\
    \midrule
    (6,3) & 8 & 19 & 24 & 261 & 50 & 0.05\\
    (24,4) & 17 & 97 & 152 & 6532 & 1424 & 9.07\\
    (120,5) & 124 & 601 & 1420 & 276725 & 76282 & 143535.58\\\bottomrule
  \end{tabular*}
\end{table}

Table~\ref{tab:TropicalCyclic} shows the results for tropical cyclic
polytopes and Table~\ref{tab:trop_permutohedra} for tropical permutohedra.
Both cases show a steep increase in computation time with increasing
dimension. The remarkable fact is that examples of these sizes can be
handled at all.

\section{Concluding Remarks and Open Questions}

\noindent
We presented combinatorial algorithms to compute (the Hasse diagram of) the
bounded faces of an unbounded pointed polyhedron.
Algorithm~\ref{algo:selective}, which takes the vertex-facet incidences
of~$\projbnd{P}$ as input, was also shown to work in practice via extensive
computations. It seems that the examples which we presented in the last
section show the limits of what one can currently compute in acceptable
time, unless some (possibly fundamentally) different idea comes up. For
instance, a central open question is the following.
\begin{question}
  Is there a polynomial total time algorithm to compute~$\bounded{P}$ for
  general $P$, given the inequalities?
\end{question}
For instance, this would follow if it were possible to modify the algorithm
of Fukuda et al.~\cite{FukLM97} to compute the bounded faces only.

\smallskip

In order to get an idea about the size of a bounded subcomplex it would be
interesting if it were possible to compute statistical information without
generating all faces.
\begin{question}
  Is there a polynomial time algorithm to compute the $f$-vector of
  $\facelattice{P}$ or $\bounded{P}$?
\end{question}
See the discussion in Section~\ref{sec:Simple} for simple polyhedra.

\bibliographystyle{amsplain}
\bibliography{bounded+subcomplex}

\end{document}